\def\Z{\mathbb Z}
\def\Q{\mathbb Q}
\def\F{\mathbb F}
\newcommand{\Sel}{\mathrm{Sel}}
\newcommand{\Lie}{\mathrm{Lie}}
\newcommand{\an}{\mathrm{an}}
\newcommand{\sat}{\mathrm{sat}}
\newcommand{\AJ}{\mathrm{AJ}}
\theoremstyle{plain}
\newtheorem{theorem}{Theorem}
\newtheorem{conjecture}{Conjecture}
\newtheorem{lemma}{Lemma}
\newtheorem{proposition}{Proposition}
\theoremstyle{definition}
\newtheorem{remark}{Remark}
\DeclareMathOperator{\Gal}{Gal}
\DeclareMathOperator{\dR}{dR}
\newcommand{\Res}{\mathrm{Res}}
\newcommand{\Pic}{\mathrm{Pic}}
\DeclareMathOperator{\loc}{loc}
\newcommand{\Ker}{\mathrm{Ker}}
\DeclareMathOperator{\rk}{rk}
\newcommand{\Jac}{\mathrm{Jac}}
\DeclareMathOperator{\C}{\mathbb{C}}
\begin{document}
\title{The Chabauty--Coleman method and $p$-adic linear forms in logarithms}
\author{Netan Dogra}
\maketitle

\setcounter{tocdepth}{1}
\newcommand{\todo}[1]{{\color{red}To do: {#1}}}
\pagestyle{headings}
\markright{CHABAUTY--COLEMAN AND LINEAR FORMS IN LOGARITHMS}
\begin{abstract}
Results in $p$-adic transcendence theory are applied to two problems in the Chabauty--Coleman method. The first is a question of McCallum and Poonen regarding repeated roots of Coleman integrals. The second is to give lower bounds on the $p$-adic distance between rational points in terms of the heights of a set of Mordell--Weil generators of the Jacobian. We also explain how, in some cases, a conjecture on the `Wieferich statistics' of Jacobians of curves implies a bound on the height of rational points of curves of small rank, in terms of the usual invariants of the curve and the height of Mordell--Weil generators of its Jacobian. The proof uses the Chabauty--Coleman method, together with effective methods in transcendence theory. We also discuss generalisations to the Chabauty--Kim method.
\end{abstract}

\tableofcontents
\section{Introduction}

\subsection{Main results}
Let $X/\Q $ be a smooth projective geometrically irreducible curve of genus $g>1$, and let $J$ denote its Jacobian. The Chabauty--Coleman method is an approach to determining the set $X(\Q )$ of rational points \cite{chabauty},\cite{coleman}. It depends on the assumption (henceforth referred to as the \textit{Chabauty--Coleman hypothesis}) that the topological closure of $J(\Q )$ in $J(\Q _p )$, which we will denote by $\overline{J(\Q )}$, is not finite index. This will happen, for instance, whenever the rank of $J$ is less than $g$. In this case, we have a non-zero subspace $W\subset H^0 (X_{\Q _p },\Omega )$, called the space of \textit{vanishing differentials}, defined to be the annihilator of the image of $J(\Q )\otimes \Q _p$ in $H^0 (X_{\Q _p },\Omega )^* $ under the $p$-adic logarithm map
\[
\log _p :J(\Q _p )\to \Lie (J)_{\Q _p }\simeq H^0 (X_{\Q _p },\Omega )^* .
\]
Then $X(\Q )$ is contained in the finite set
\[
X(\Q _p )_1 :=\{x\in X(\Q _p ):\forall \omega \in W,\int ^x _b \omega =0 \},
\]
where $b$ is a point in $X(\Q )$.

Our first result concerns a question of McCallum and Poonen \cite[\S 7, Problem 4]{mccallumpoonen} on the effectivity of the Chabauty--Coleman algorithm. To explain this question, we briefly recall how the Chabauty--Coleman method may be used to determine the set of rational points. Under the hypothesis above, $X(\Q _p )_1 $ is a finite set containing $X(\Q )$, and for any $n$ one can compute its image in $X(\mathbb{Z}/p^n \mathbb{Z})$ by computing $p$-adic approximations to $\int _b \omega $ for $\omega $ in a basis of $W$.

One can compute the zeroes of $\int _b \omega $ by $p$-adically approximating the first few terms of its power series expansion on each residue disk, and bounding the valuation of the coefficients to estimate the roots. The first obstacle to using this to determine $X(\Q )$ is if there is a point $x\in X(\mathbb{Z}/p^n \mathbb{Z})$ which is in the image of $X(\Q _p )_1 $ but \textit{not} in the image of $X(\Q )$. In thise case, one may hope to prove that $x$ does not come from $X(\Q )$ using the Mordell--Weil sieve, see for example \cite{bruinstoll}, \cite{siksek:sieve}.

However, there is also a second problem if there is a $p$-adic point $x\in X(\Q _p )$ such that, for all vanishing differentials $\omega $, the function $\int _b \omega $ has a repeated root  at $x$. Unless one has a (theoretical) way of proving that $x$ is a repeated root, it is impossible to rule out that $\int _b \omega $ has two zeroes very close to $b$. If $x$ is not a rational point, one could hope to rule out both of these points being rational using the Mordell--Weil sieve, but if $x$ is in fact a rational points this will not work. Hence one needs a theoretical rather than computational understanding of the rational roots of $\omega $. Our first result gives a classification of such roots in the simplest nontrivial case of Chabauty's method (where all simple factors of the Jacobian have rank at most one or have rational points which are dense in a finite index subgroup of the $p$-adic points).

\begin{theorem}\label{thm1}
Suppose that $\overline{J(\Q )}$ is not finite index in $J(\Q _p )$, and suppose that for every simple isogeny factor $A$ of $J$, either $A$ has Mordell--Weil rank at most one or $\overline{A(\Q )}$ has finite index in $A(\Q _p )$. Then (at least) one of the following holds.
\begin{enumerate}
\item For any algebraic point $x$ of $X$, there is a vanishing differential $\omega $ such that the Coleman integral $\int _b \omega $ does not have a repeated root at $x$.
\item There is a cover $f:X\to Y$ such that $x$ is a ramification point of $f$, and $Y$ is either a rank zero elliptic curve or a curve of genus greater than 1 which also satisfies the Chabauty--Coleman hypothesis, and has the property that at any algebraic point $x$ of $X$, there is a vanishing differential $\omega $ such that the Coleman integral $\int \omega $ does not have a repeated root at $x$.
\end{enumerate}
\end{theorem}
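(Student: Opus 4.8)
The plan is to prove that if alternative~(1) fails then alternative~(2) holds. Suppose, then, that there is an algebraic point $x$ of $X$ --- defined over some number field, which after fixing a place above $p$ we may also regard as a point over a finite extension of $\Q_{p}$ --- at which $\int_{b}^{x}\omega$ has a repeated root for every vanishing differential $\omega$. Unwinding the definition, this says two things: (a)~every $\omega\in W$ vanishes as a differential at $x$, i.e.\ $x$ lies in the base locus of $W$ (the common zeros of the differentials in $W$); and (b)~$\int_{b}^{x}\omega=0$ for all $\omega\in W$. The first move is to reinterpret~(a) through the Abel--Jacobi map $\iota\colon X\to J$ based at $b$: since $\iota^{*}$ identifies $H^{0}(J,\Omega)$ with $H^{0}(X,\Omega)$ and $W$ is the annihilator of $V:=\langle\log_{p}J(\Q)\rangle\subseteq\Lie(J)_{\Q_{p}}$, condition~(a) is equivalent to the statement that the image $v$ of a nonzero tangent vector at $x$ under $d\iota$ --- which is an \emph{algebraic} vector, lying in $\Lie(J)_{\overline{\Q}}$ --- lies in $V\otimes_{\Q_{p}}\overline{\Q_{p}}$. (That $v\ne0$ uses only that the canonical system is base-point free in genus $\ge1$.)

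Transcendence enters next, and this is the heart of the argument. Fixing an isogeny decomposition of $J$ into simple factors decomposes $\Lie(J)$, $H^{0}(X,\Omega)$ and $J(\Q)\otimes\Q$ compatibly, and the intersection of $V$ with the $A$-isotypic summand of $\Lie(J)$ is zero when $A$ has rank zero, is spanned by $\log_{p}P$ for a generator $P$ of $A(\Q)$ modulo torsion when $A$ has rank one, and is the whole summand when $\overline{A(\Q)}$ has finite index --- and by hypothesis there is no other case. So, up to bookkeeping over multiplicities, the component of $v$ along a rank-zero factor vanishes, and the component $v_{A}$ along a rank-one simple factor $A$ lies on the $\overline{\Q_{p}}$-line through $\log_{p}P$. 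If such an $A$ has dimension $\ge2$ and $v_{A}\ne0$, then the one-dimensional $\overline{\Q}$-subspace $\overline{\Q}\cdot v_{A}\subseteq\Lie(A)_{\overline{\Q}}$ would contain $\log_{p}P$ after extension of scalars; by the $p$-adic analytic subgroup theorem --- equivalently, a $p$-adic linear-forms-in-logarithms estimate --- such a $\overline{\Q}$-subspace must contain $\Lie(B)_{\overline{\Q}}$ for the smallest abelian subvariety $B\subseteq A$ with $P\in B$ modulo torsion, and since $P$ is non-torsion and $A$ is $\Q$-simple this forces $B=A$ and hence $\dim A\le1$, a contradiction. Therefore $v_{A}=0$ for every positive-rank simple factor of dimension $\ge2$. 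This is precisely where the hypothesis is used: for a rank-$\ge2$ factor one cannot run this argument, while a finite-index factor contributes nothing to $W$ in the first place.

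Now I would build the cover. Let $J_{T}$ be the product of the simple isogeny factors of $J$ that are of rank zero or of rank one and dimension $\ge2$, and let $q\colon J\to J_{T}$ be the projection; then $h:=q\circ\iota\colon X\to J_{T}$ has vanishing differential at $x$, and $h$ is non-constant because the Chabauty--Coleman hypothesis for $X$ is exactly the statement $W\ne0$, i.e.\ $J_{T}\ne0$. Take $Y$ to be the normalisation of the image curve $h(X)\subseteq J_{T}$, so that $f\colon X\to Y$ is a cover; $Y$ maps non-constantly to an abelian variety, hence $g(Y)\ge1$, and since $h(b)=0$ the induced homomorphism $\Jac(Y)\to J_{T}$ is surjective, so $J_{T}$ is an isogeny factor of $\Jac(Y)$, while $\Jac(Y)$ is itself an isogeny factor of $J$. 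If $g(Y)=1$, then $\Jac(Y)\sim J_{T}$ is a single elliptic factor lying in $T$; being one-dimensional it cannot be a rank-one factor in $T$ (which have dimension $\ge2$), so it has rank zero. If $g(Y)>1$, then the simple factors of $\Jac(Y)$, being among those of $J$, inherit the rank hypothesis, while each factor of $J_{T}$ has the property that the $p$-adic closure of its rational points is a proper subgroup (finite for rank-zero factors, of dimension $\le1$ for rank-one factors, which here have dimension $\ge2$), so $\overline{J_{T}(\Q)}$, and hence $\overline{\Jac(Y)(\Q)}$, has infinite index; thus $Y$ satisfies the Chabauty--Coleman hypothesis. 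To see that $x$ genuinely ramifies in $f$ --- as opposed to $h$ merely ramifying at $x$ in a manner absorbed by a cuspidal branch of $h(X)$ --- I would use condition~(b): the same transcendence results pin down $h(x)$ (a torsion point in the rank-zero directions, a rational multiple of $P$ in the rank-one directions) closely enough to control the local geometry of $h(X)$ there. Finally, to replace ``$Y$ satisfies Chabauty--Coleman'' by ``$Y$ satisfies alternative~(1)'', induct on the genus: if $Y$ fails~(1), the construction produces a cover $Y\to Y'$ with $g(Y')<g(Y)$ of one of the two permitted types, and $x$ still ramifies in $X\to Y'$ by the chain rule; the iteration terminates.

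The hard part is the transcendence step of the second paragraph. What is really needed is a form of the $p$-adic analytic subgroup theorem --- or of $p$-adic linear forms in logarithms --- applicable to the span of the full Mordell--Weil group, with the geometric vector $v_{A}$ in the role of coefficient data, and arranged so that the dichotomy ``rank $\le1$ or finite index'' is exactly what yields $v_{A}=0$; extracting such a statement from the available effective transcendence results, and carrying the fields of definition (of $x$, of the simple factors, of the isogeny decomposition) through the whole argument, is where the real difficulty lies. The geometry --- that the curve $Y$ produced always lies in the permitted list, and that $x$ truly ramifies in $X\to Y$ --- is comparatively routine, subject to the point about cuspidal branches flagged above.
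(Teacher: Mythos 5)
Your argument is essentially the paper's argument, traversed in the same order: translate the repeated-root condition into the statement that the algebraic line $d\iota_x(T_xX)=H^0(X,\Omega(x))^{\perp}$ lies in $\overline{J(\Q)}_{\Q_p}$; decompose via the isogeny $J\sim\prod A_i$; observe that on a rank-one simple factor of dimension $\ge 2$ the component of this line sits inside the line $\Q_p\log_p P$, so that if nonzero the one-dimensional $\overline{\Q}$-space $\overline{\Q}\cdot v_{A}$ would contain $\log_p P$, contradicting the $p$-adic analytic subgroup theorem; conclude that the tangent map to the ``defective'' part of $J$ vanishes at $x$; build a cover from this degenerate map; induct on genus. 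Your only structural variation is that you project $X$ to the whole product $J_T$ of defective factors, whereas the paper projects to a \emph{single} chosen factor $A_i$ with $\operatorname{rank}\le 1<\dim A_i$ before extracting the image curve $X'$; this is cosmetic.

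Two remarks on points you treat lightly. First, your step ``since $P$ is non-torsion and $A$ is $\Q$-simple this forces $B=A$'' elides the fact that the subgroup produced by the analytic subgroup theorem is a priori only defined over $\overline{\Q}$, so $\Q$-simplicity of $A$ does not immediately kill it; this is exactly the content of the paper's Lemma~\ref{easy}, which you would need (in substance if not in name) to close the argument. Second, and to your credit, you explicitly flag the point the paper skates over: the paper asserts that $X\to A_i$ factors through a smooth curve $X'$ admitting a \emph{closed immersion} into $A_i$, but the image $h(X)$ may be singular, and if it has a cuspidal branch at $h(x)$ then $dh_x=0$ does not force $df_x=0$ for $f\colon X\to Y$ the map to the normalisation of $h(X)$ --- the vanishing could be absorbed by the normalisation map. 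You propose to repair this using the other half of the repeated-root condition ($\int_b^x\omega=0$), but you do not actually carry this out, so it remains a sketch; still, you have correctly identified a genuine lacuna that the published argument does not address, and which in my view is the one place where the proof as written needs shoring up.
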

\begin{remark}
In case (2), if the identity component $P(Y/X)$ of $\Ker (J\to \Jac (Y))$ has the property that the topological closure of $P(Y/X)(\Q )$ in $P(Y/X)(\Q _p )$ has finite index in $P(Y/X)(\Q _p )$, then case (1) does not occur. In general (1) and (2) need not be mutually exclusive.
\end{remark}
The second result concerns the \textit{$p$-adic proximity} of points of $X(\Q _p )_1$ to points in $X(\Q )$. For points $x,y\in X(\Q _p )$, we define the $p$-adic distance between $x$ and $y$ to be $p^{-d}$, where $d$ is the largest $m$ such that $x$ and $y$ have the same image in $X(\mathbb{Z}/p^m \mathbb{Z})$, for $X/\mathbb{Z}$ a minimal proper regular model. We suppose we are given a point $x\in X(\Q )$, and want to bound the proximity of other points in $X(\Q _p )_1$ to $x$ in terms of the height of $x$ and of a set of Mordell--Weil generators of the Jacobian. Let $X$ be a curve of genus $g>1$ with Jacobian $J$, and fix a projective embedding of $J$, defining a (logarithmic) height $h$.
\begin{theorem}\label{thm2}
Suppose that $\overline{J(\Q )}$ is not finite index in $J(\Q _p )$. Let $\iota _i :A_i \hookrightarrow J$ be simple abelian subvarieties of $J$ of degree $d_i$ such that
\[
\prod _{i=1}^n A_i \to J
\]
is an isogeny. Suppose that for all $i$, either $A_i $ has Mordell--Weil rank at most one or $\overline{A_i (\Q )}$ has finite index in $A_i (\Q _p )$. Then there is an effectively computable constant $c_L $ depending on a choice $L$ of $\mathbb{Z}$-basis of $H^1 (X_{\mathbb{Z}},\mathcal{O}_X )$ (where $X_\mathbb{Z}$ is a minimal proper regular model of $X$) with the following property. For any $x\in X(\Q )$ and $y\in X(\Q _p )_1 $ distinct from $x$, if $x$ is congruent to $y$ modulo $p^N$ then
\begin{equation}\label{eqn:main_ineq}
N <c_0 c_L (h(x)+c_1 \kappa )(m_p h_0 +c_2 )^n (\log (h(x)+c_1 \kappa ) +\log (m_p h_0 +c_2 )+c_3 )^{n+3},
\end{equation}
where $m_p $ is the number of $\F _p$ points of the N\'eron model of $J$,  $P_1 ,\ldots P_r$ in $J(\Q )$ are a set of Mordell--Weil generators, 
$h_0$ is $\max \{\log (3),h(P_1 ),\ldots ,h(P_r )\}$, $\kappa :=\max \{1,h_F (J),\log (d_i )\}$, $h(x)$ is the height of $x$ in $\mathbb{P}^{g-1}$ with respect to the canonical embedding and the basis of $H^1 (X,\mathcal{O})$ above, $h_F (J)$ is the Faltings height of $J$, and $c_0 ,c_1 ,c_2 ,c_3 $ are constants depending only on $g$.
\end{theorem}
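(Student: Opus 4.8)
The plan is to convert the vanishing of the relevant Coleman integrals into an upper bound for the $p$-adic valuation of a leading Taylor coefficient of a well-chosen vanishing differential, and then to bound that valuation by effective $p$-adic transcendence theory (a $p$-adic analogue of Baker's theorem in its abelian form), the hypothesis on the $A_i$ being what brings the relevant quantity into the shape of a linear form in logarithms. First I would observe that since $X(\Q)\subseteq X(\Q_p)_1$, both $\int_b^x\omega$ and $\int_b^y\omega$ vanish for every vanishing differential $\omega$, so $\int_x^y\omega=0$; and since $x\equiv y\bmod p^N$ with $N\ge 1$ the two points lie in one residue disk. Fixing a local parameter $t$ at $x$ adapted to the minimal regular model, for which $x\equiv y\bmod p^N$ forces $v_p(t(y))\ge N-O(1)$, and writing $\omega=\bigl(\sum_{n\ge 0}a_n(\omega)t^n\bigr)dt$ and $d=d(\omega,x):=1+\min\{n:a_n(\omega)\ne 0\}$, comparison of valuations in $0=\sum_{n\ge d-1}\tfrac{a_n(\omega)}{n+1}t(y)^{n+1}$, using $v_p(n+1)\le\log_p(n+1)$ and the bound $v_p(a_n(\omega))\ge -C_{g,L}(h(x)+\kappa)(n+1)$ (valid because the poles of $\omega$ lie at $p$-adic distance $\ge e^{-O_g(h(x)+\kappa)}$ from $x$, $\omega$ being a fixed differential over $\Q$ once $L$ and the isogeny are chosen), yields
\[
N\ \le\ v_p\bigl(a_{d-1}(\omega)\bigr)+O_{g,L}\bigl(h(x)+\kappa\bigr),
\]
since $d\le 2g-2$. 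It therefore suffices to produce a vanishing $\omega$ with $v_p(a_{d(\omega,x)-1}(\omega))$ bounded as in \eqref{eqn:main_ineq}.

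To choose $\omega$, I would use that $\prod_iA_i\to J$ and Abel--Jacobi identify $H^0(X_{\Q_p},\Omega)$ with $\bigoplus_iH^0(A_{i,\Q_p},\Omega)$ and, under the hypothesis on the $A_i$, carry the space $W$ of vanishing differentials to $\bigoplus_iW_i$, where $W_i=H^0(A_{i,\Q_p},\Omega)$ if $\rk A_i=0$, $W_i=0$ if $\overline{A_i(\Q)}$ is finite index, and $W_i=(\log_pQ_i)^\perp$ (a hyperplane) if $\rk A_i=1$ with Mordell--Weil generator $Q_i$. Put $d_0=\min_{\omega\in W}d(\omega,x)$; then $\omega\mapsto a_{d_0-1}(\omega)$ is a nonzero $\Q_p$-linear functional on $W$, hence nonzero on some $W_i$, and I would take $\omega$ to be a basis vector of that $W_i$ coming from the $L$-integral structure. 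If $\rk A_i=0$, this $\omega$ is defined over $\Q$ and $a_{d_0-1}(\omega)$ is a nonzero rational of logarithmic height $O_g(h(x)+\kappa)$, so $v_p(a_{d_0-1}(\omega))=O_g(h(x)+\kappa)$ with nothing further to do. If $\rk A_i=1$, a basis vector of $W_i$ has the shape $\ell_{i,k}\widetilde\omega_{i,1}-\ell_{i,1}\widetilde\omega_{i,k}$, where $\ell_{i,j}=\int_0^{Q_i}\omega_{i,j}$ are the $p$-adic abelian logarithms of $Q_i$ and $\widetilde\omega_{i,j}$ is the algebraic differential $\omega_{i,j}$ pulled back to $X$, so that
\[
a_{d_0-1}(\omega)=\alpha\,\ell_{i,k}-\beta\,\ell_{i,1}
\]
is a $p$-adic linear form in two abelian logarithms of $Q_i$ with algebraic coefficients $\alpha,\beta$ of logarithmic height $O_g(h(x)+\kappa)$. (Alternatively one avoids the decomposition and works with all the $P_j$ simultaneously; the $n$-fold product in \eqref{eqn:main_ineq} is exactly what a linear forms in logarithms estimate applied to $J$ directly produces.) When $x$ is a repeated root of every vanishing differential, so $d_0\ge 2$, I would first invoke Theorem~\ref{thm1}: case (1) is then excluded, and in case (2) one takes $\omega=f^*\omega_Y$ for the cover $f:X\to Y$ and a vanishing differential $\omega_Y$ on $Y$ with $\omega_Y(f(x))\ne 0$; equivalently, since the congruence exponent on $Y$ at $f(x),f(y)$ is at least $eN-O(1)$ for $e$ the ramification index, one replaces $X$ by $Y$ and inducts on the genus, the base case $Y$ a rank-zero elliptic curve being immediate as $X(\Q_p)_1$ is then the torsion of $Y$.

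For the transcendence step, clearing the denominators of $\alpha,\beta$ multiplies the linear form by an integer of size $e^{O_g(h(x)+\kappa)}$ without increasing its valuation, reducing us to a form $a\ell_{i,k}-b\ell_{i,1}$ with $a,b\in\Z$ and $\log\max(|a|,|b|)=O_g(h(x)+\kappa)$. That this form is nonzero follows from W\"ustholz's analytic subgroup theorem, since otherwise $a_{d_0-1}$ would vanish on all of $W_i$, contradicting the choice of $i$. Applying an effective $p$-adic lower bound for linear forms in abelian logarithms---Yu's $p$-adic analogue of Baker's theorem together with its extension to abelian varieties---with the height of the abelian variety controlled by $h_F(A_i)\le h_F(J)+O_g(\log d_i)=O_g(\kappa)$, the height of $Q_i$ by $h_0$, the residue and torsion data by $m_p$, and the coefficient size giving $\log B=O_g(h(x)+\kappa)$, produces an upper bound for $v_p(a_{d_0-1}(\omega))$ of precisely the shape of the right-hand side of \eqref{eqn:main_ineq}. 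Feeding this into the inequality of the first step and absorbing the additive $O_{g,L}(h(x)+\kappa)$ there completes the proof.

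The main obstacle is the transcendence input: one needs a $p$-adic linear forms in logarithms estimate for abelian varieties in which the dependence on the Faltings height, on the degrees $d_i$, on $m_p$, and on the coefficient size $B$ is fully explicit and of the stated polynomial-times-logarithm shape, supplemented by the analytic subgroup theorem to guarantee non-vanishing. The remaining work---tracking constants through the isogeny $\prod_iA_i\to J$ and the minimal regular model, especially the denominator bound $v_p(a_n(\omega))\ge -C_{g,L}(h(x)+\kappa)(n+1)$, and verifying that passage to the cover $Y$ of Theorem~\ref{thm1} preserves every estimate---is routine but lengthy.
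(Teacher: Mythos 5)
Your proposal follows essentially the same overall strategy as the paper: convert the congruence $x\equiv y\bmod p^N$ into a lower bound on the valuation of a ``leading'' Taylor coefficient of a vanishing differential (equivalently, proximity of a logarithm to a rational hyperplane), and then apply a $p$-adic analytic-subgroup-type transcendence estimate---the paper uses the quantitative theorem of Fuchs and Duc Pham (Theorem~\ref{thmFP}), which is exactly what you are alluding to with ``Yu's theorem extended to abelian varieties.'' The main conceptual difference is one of dualisation and uniformity. The paper's Lemma~\ref{lem1} is a clean Newton-polygon argument showing that any \emph{integral} vanishing differential $\omega$ lies within $p^{-N}$ of $H^0(X_{\Q_p},\Omega(-(m+1)x))$; working with an integral $\omega$ and an integral parameter means the coefficients of $dF/dt$ are already in $\Z_p$, so no ``denominator'' estimate is needed at all. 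It then reformulates this as $\overline{J(\Q)}_{\Q_p}$ being $p^N$-close to a subspace $C_\Q$ defined over $\Q$, and applies Theorem~\ref{thmFP} to a single $\Q$-point $P=m_p P_0\in J^1(\Q_p)$ against a hyperplane containing $C_\Q$, invoking the Masser--W\"ustholz tangent-space lemma, Silverman's height comparison, and Pazuki's theta-vs-Faltings-height comparison to make the constants explicit. Your version instead isolates a single basis vector $\omega$ of one $W_i$ and writes the leading coefficient as a two-term linear form $\alpha\ell_{i,k}-\beta\ell_{i,1}$; that is the dual formulation and would work, but it forces the parameter- and denominator-bookkeeping that the paper's lemma sidesteps.

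Two points in your execution are genuinely off. First, the justification ``valid because the poles of $\omega$ lie at $p$-adic distance $\ge e^{-O_g(h(x)+\kappa)}$ from $x$'' is wrong: vanishing differentials are global holomorphic differentials and have no poles. If you expand in an integral parameter at $x$, the Taylor coefficients of the algebraic differentials are already integral, and the only non-integrality in your chosen $\omega$ comes from the $\ell_{i,j}$ factors, which are logarithms of bounded rational points and hence have bounded (in fact nonnegative, after multiplying by $m_p$) valuation---so the denominator issue dissolves once set up correctly, but your stated reason for it is not right. Second, and more significantly, your treatment of the repeated-root case via Theorem~\ref{thm1} plus induction on the genus is an unnecessary detour; the paper handles arbitrary order of vanishing $m$ at $x$ uniformly through the subspace $H^0(X,\Omega(-(m+1)x))$, and the congruence-exponent/ramification-index bookkeeping your induction requires is avoided entirely. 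Both issues are fixable, so I would call this the same approach with some local errors and an inefficiency, rather than a different route.
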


\begin{remark}
The reason that Theorems \ref{thm1} and \ref{thm2} have the restriction to simple factors of the Jacobian having rank 1 is as follows. If $A$ is a simple abelian variety over $\Q $ and $P$ is a rational point of $A$ of infinite order, then results in $p$-adic transcendence theory due to Fuchs and Pham (which are $p$-adic analogues of theorems of W\"ustholz \cite{wustholz} and Hirata--Kohno \cite{HK1} respectively) imply that the $p$-adic logarithm of $P$ can never be contained in a proper $\Q $-subspace of the Lie algebra of $A$, and control how close $\log (P)$ can get to such subspaces. As we explain below, the settings of Theorems \ref{thm1} and \ref{thm2} instead concern the reverse inclusion: i.e. situations when a proper $\Q _p$ subspace of $\Lie (A)_{\Q _p }$ generated by the logarithms of algebraic points contains (or nearly contains) a line defined over $\Q $. Conjectures in transcendence theory imply this can never happen, but as far as we know nothing has been proved about this. Hence we restrict to the case that the rank of $A$ is one, so the inclusion is an equality and can be reversed.

Note that, by a recent theorem of R\'emond and Gaudron we may the integers $d_i$ appearing in the statement of Theorem \ref{thm2} in terms of the Faltings height of $J$ \cite[Th\'eor\`eme 1.3]{remond2020nouveaux}. The reason that we have phrased \eqref{eqn:main_ineq} as we have is that, as will be apparent from the proof, the constant $c_L$ is a constant $\omega _L ^{g+3}$ which arises in the theorem of Fuchs and Pham \cite[Theorem 2.1]{fuchs_pham}, and hence any bound for that constant gives a more explicit form of Theorem \ref{thm2}. In particular, if one had a $p$-adic analogue of the theorem of Gaudron \cite{gaudron} bounding the proximity of archimedean logarithms of rational points to rational subspaces, then one could bound $N$ purely in terms of explicit invariants of $X,J$ and $x$.
\end{remark}

What does controlling the $p$-adic proximity tell us about the \textit{height} of rational points on $X$? As we explain below, this question is closely related to the problem of understanding the \textit{Wieferich statistics} of the Jacobian of $X$. More precisely, if $Q\in J(\Q )$ denotes a point of infinite order on $J(\Q )$, then we can prove an effective height bound on the rational points of $X$ in terms of the regulator of the Jacobian of $X$ if we assume there is an effectively computable constant $p_0$ such that for all $p>p_0$, $\log _p (Q )$ does not vanish modulo $p$.

If we instead ask for a bound on the height of $S$-integral points (for some finite set of primes $S$, and relative to some zero-dimensional subscheme of $X$) then these methods do give a bound in terms of the regulator of the Jacobian (still with the restriction that the genus is two). As, in this case, such a bound has also been proved by Hirata--Kohno \cite{HK2}, without any condition on the genus or the Mordell--Weil rank (but still in terms of the regulator), we do not pursue this here.
%

%
\subsection{Chabauty--Coleman and Wieferich statistics}
A \textit{Wieferich prime} is a prime $p$ such that $2^{p-1}$ is congruent to 1 modulo $p^2$. Equivalently, it is a prime such the $p$-adic logarithm of 2 vanishes to first order. In \cite{katz}, Katz considered the more general problem of determining the statistical properties of the mod $p$ logarithm of a point $Q$ on a $g$-dimensional semi-abelian variety $G$ (over $\mathbb{Z}[1/N]$). He explained how to define a $g$-dimensional torus $T=\Lie (G)\otimes \mathbb{R}/\mathbb{Z}$ such that $\log _p (Q)$ may be viewed as element of $T$ for almost all $p$. Katz's conjecture \cite{katz}[Conjecture 13.1] in particular states that the sequence $(\log _p (Q))$ is \textit{equidistributed} in $T$ if $\langle Q \rangle $ is Zariski dense in $G$.

We show that a slightly different randomness property of the Wieferich statistics of the mod $p$ logarithms of rational points on semi-abelian varieties would give new cases where one can bound the height of rational points on curves in terms of the regulators of their Jacobians. The randomness property we need is that the mod $p$ logarithm of a point lies in a proper two dimensional subspace of $\Lie (G)$ for only finitely many $p$, unless there is an `algebraic reason' for it do so. This is motivated by the heuristic that, if there is no algebraic obstruction (e.g. if $G$ is geometrically simple) then the position of $\log (P)$ in $\Lie (G)\otimes \frac{1}{p}\mathbb{Z}/\mathbb{Z}$ is `random', and hence the probability of lying in a codimension $d$ subspace is $p^{-d}$.

If true, this statement is presumably very difficult to prove. For example, to prove the first case of Fermat's last theorem, it is enough to prove it for primes $p$ such that $\log _p (q)$ is zero mod $p^2$ for all primes $q\leq 113$ \cite{suzuki}, and hence even for $\mathbb{G}_m ^2 $ this would already be a rather strong result.
%
%
%
%
%
\section{The $p$-adic analytic subgroup theorem}
In this section we explain the proof of Theorem \ref{thm1}.

\subsection{A brief review of the Chabauty--Coleman method}
Given an abelian variety $A$ over a number field $K$, and a prime $v|p$ of $K$, we obtain a $v$-adic logarithm
\[
\log _v :A(K_v )\to \Lie (A_{K_v })\simeq H^0 (A_{K_v },\Omega )^* .
\]
If $X$ is a subvariety of $A$, then we deduce that the image of $X(K) \subset X(K_v )$ in $\Lie (A_{K_v })$ is contained in 
$
\log _v (A(K)).
$ When $i:X\to A$ is a curve which generates $A$, and the topological closure of $A(K)$ in $A(K_v )$ is not finite index, this implies that $X(K)$ is contained in the finite set $(\log \circ i)^{-1}(\overline{A(K)}_{\Q _p } )$, where we write $\overline{A(K)}_{\Q _p }$ to mean the $\Q _p $-vector subspace of $H^0 (A,\Omega )^* $ spanned by the image of $A(K)$. Alternatively, the image of $X(K)$ in $H^0 (A_{K_v },\Omega )^* $ is contained in the intersection
\[
(\log \circ i)(X)\cap (\overline{A(K)}_{\Q _p } ).
\]
To prove Theorem 1, we want to show that the points of multiplicity at least 2 do not come from algebraic points of $A$.

\subsection {The $p$-adic analytic subgroup theorem}
The proof of Theorem \ref{thm1} uses the $p$-adic analytic subgroup theorem, whose statement we now explain. Let $A$ and $\log _v $ be as above. Taking the limit over finite extensions $K'|K$, we obtain a continuous homomorphism
\[
\log :A(\overline{\Q }_p )\to \Lie (A(\overline{\Q }_p ))
\]
which may hence be extended to
\[
\log :A(\C _p )\to \Lie (A(\C _p ))
\]

The strongest theorem known about the transcendence properties of the logarithms of algebraic points is the following theorem, which is a $p$-adic analogue of W\"ustholz's analytic subgroup theorem \cite{wustholz}.
\begin{theorem}[Fuchs--Duc Pham \cite{fuchs_pham_subgroup}, Matev \cite{matev2010p}]
Let $G$ be a commutative algebraic group over $\overline{\Q }$ and $V\subset \Lie (G)$ a proper $\overline{\Q }$-subspace. If $g\in G(\overline{\Q })$ has the property that $\log (g)$ is nonzero and lies in $V_{\mathbb{C}_p }\subset \Lie (G)_{\mathbb{C}_p }$, then there is a proper algebraic subgroup $H<G$ defined over $\overline{\Q }$ such that $g\in H(\overline{\Q })$ and $\Lie (H)\subset V$.
\end{theorem}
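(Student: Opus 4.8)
The plan is to prove this $p$-adic transcendence statement by transporting W\"ustholz's proof of the archimedean analytic subgroup theorem to the $p$-adic world, using that here $\log$ denotes the $p$-adic logarithm, an analytic homomorphism defined on a neighbourhood of the identity in $G(\C_p)$. The first move is the standard reduction. Since $\log$ is a homomorphism, $\log(g)\neq 0$ forces $g$ to be non-torsion, and any $H$ as in the conclusion must contain the Zariski closure $G_0$ of the subgroup generated by $g$; so it suffices to show that $G_0$ is proper and that $\Lie(G_0)\subseteq V$, and then take $H=G_0$. After standard reductions (passing to $G_0^0$, replacing $g$ by a power that lies in and generates it, and replacing $V$ by $V\cap\Lie(G_0^0)$, for which the hypothesis $\log(g)\in V_{\C_p}\setminus\{0\}$ persists) we are reduced to: $G$ is connected, $g\in G(\overline\Q)$ generates $G$ Zariski-densely, $V\subseteq\Lie(G)$ is a $\overline\Q$-subspace with $\log(g)\in V_{\C_p}\setminus\{0\}$, and the goal becomes to show $V=\Lie(G)$. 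We assume instead that $V$ is proper and aim for a contradiction. Fixing a number field $k$ with a place $v\mid p$ over which $G$, $g$ and $V$ are defined and with $\log(g)\in V_{k_v}$, and replacing $g$ by a multiple $g^m$ with $m$ divisible by a large power of $p$, we may assume $g$ lies in a small open subgroup on which $\exp_G$ is analytic and, for a parameter $S$ fixed below, that $\log(g),2\log(g),\dots,S\log(g)$ all lie in its domain of convergence, with the relevant denominators controlled by $p$-adic Lie theory and the formal group law of $G$.

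Next I would construct the auxiliary function. Choosing an equivariant projective completion of $G$ --- using that $G$ is, up to isogeny, an extension of an abelian variety by $\mathbb{G}_a^a\times\mathbb{G}_m^b$ --- together with a very ample line bundle, Siegel's lemma produces a nonzero section $P$ of a high power, with coefficients in $k$ of controlled height and multidegree at most $D$, all of whose invariant derivatives along $V$ of order $<T$ vanish at the points $mg$ for $1\le m\le S$. This linear system is solvable as soon as the number of conditions, of order $S\,T^{\dim V}$, is smaller than the number of free coefficients, of order $D^{\dim G}$; we fix $D,S,T$ so that this holds with room to spare. The reason for demanding vanishing in the $V$-directions is exactly that $\log(g)\in V_{\C_p}$: pulling $P$ back along the $\C_p$-analytic arc $z\mapsto\exp_G(z\log(g))$, which runs through $e,g,2g,\dots$, yields a one-variable analytic function $F$ vanishing to order $\ge T$ at $z=1,\dots,S$.

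The analytic core is then a $p$-adic Schwarz--Jensen estimate: the many zeroes of $F$, counted with multiplicity, force $|F|$ to be very small on a larger disk, which bounds $|\partial^{\tau}P(mg)|_v$ for a wider range of $m$ and $\tau$; iterating this extrapolation yields elements $\partial^{\tau}P(mg)\in k$ that are simultaneously of bounded height and of extremely small $v$-adic absolute value. By the product formula, a nonzero element of $k$ cannot be $v$-adically smaller than its height permits, so all of these derivatives must vanish. Hence $P$ vanishes to high order along $V$ at a large set of multiples of $g$ --- more than a section of multidegree $D$ can, unless there is an algebraic obstruction. Feeding this into the Masser--W\"ustholz zero estimate --- a purely algebraic statement valid over $k$ --- produces a proper connected algebraic subgroup $G'\subsetneq G$ accounting for the vanishing: either the multiples $g,2g,\dots,Sg$ are not pairwise distinct modulo $G'$, which is impossible since $g$ generates $G$, or $\Lie(G')\subseteq V$ and $G'$ contains a multiple $g^{j}$ with $j$ bounded, whence $\overline{\langle g^j\rangle}=G$ (as $G$ is connected and $g$ generates it) and so $G\subseteq G'$, again contradicting $G'\subsetneq G$. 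Either way we reach a contradiction, so $V=\Lie(G)$; together with the reductions this proves the theorem.

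The step I expect to be the genuine obstacle --- and the reason the $p$-adic version trailed W\"ustholz's theorem --- is making the $p$-adic analysis cohere: the finite radius of convergence of $\exp_G$, the precise form of the $p$-adic Schwarz lemma, and the denominators entering Siegel's lemma through the formal group law must all be balanced against the degree and height parameters, and the non-compact directions $\mathbb{G}_a$ and $\mathbb{G}_m$ need separate treatment, since that is where the $p$-adic and archimedean exponentials differ most sharply. By contrast, the Masser--W\"ustholz zero estimate used above is algebraic in nature and transfers from the complex setting essentially unchanged.
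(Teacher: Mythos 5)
The paper does not prove this statement at all: it is imported as a black box from Fuchs--Pham \cite{fuchs_pham_subgroup} and Matev \cite{matev2010p}, so there is no internal proof to compare yours against; the relevant comparison is with those references, whose arguments are indeed a $p$-adic transposition of W\"ustholz's method, i.e.\ the same overall strategy you outline (reduction to the Zariski closure of $\langle g\rangle$, an auxiliary section produced by Siegel's lemma vanishing along $V$ at multiples of $g$, a $p$-adic Schwarz-lemma extrapolation along $z\mapsto \exp_G(z\log g)$, the product formula, and a Masser--W\"ustholz multiplicity estimate). Your opening reduction is essentially sound: one should take $H=\overline{\langle g\rangle}$ (the full Zariski closure, not its identity component, so that $g\in H$), and functoriality of $\log$ does place $\log(g^m)$ in $\Lie(G_0^{0})_{\C_p}\cap V_{\C_p}$, so the contrapositive form you aim at is the right target. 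Your small dichotomy at the end is also internally consistent, since for connected $G$ Zariski-generated by $g$ no nonzero multiple of $g$ can lie in a proper algebraic subgroup.

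As a proof, however, the proposal is an outline whose unverified steps are precisely the content of the theorem. The balance between (i) the finite radius of convergence of $\exp_G$ --- hence the gain available in the $p$-adic Schwarz lemma, which depends on $|\log g|_p$ and so on replacing $g$ by high $p$-power multiples at a cost in height, (ii) the height bounds from Siegel's lemma and the denominators coming from the formal group, and (iii) the choice of the parameters $D,S,T$, is nowhere carried out, and no argument is given that a consistent choice exists; you flag this as ``the genuine obstacle,'' but flagging it does not close it, and it is exactly where the $p$-adic and archimedean proofs diverge. Likewise, the passage from the zero estimate to a contradiction is asserted rather than derived: the Masser--W\"ustholz estimate yields a degree inequality for an obstructing subgroup $G'$, and extracting from it that either the translates collapse modulo $G'$ or that $\Lie(G')\subseteq V$ with a bounded multiple of $g$ in $G'$ requires the specific parameter choices you have not made. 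So the strategy matches the cited literature, but what you have is a program, not a proof; for the purposes of this paper the correct move is what the author does, namely to quote the theorem from \cite{fuchs_pham_subgroup} and \cite{matev2010p}.
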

Now let $X$ as above be a smooth projective curve and take $A=J$ to be its Jacobian variety. Suppose that we have a rational point $b\in X(\Q )$ defining an Abel--Jacobi morphism
\[
i:X\to J.
\]
We suppose that the the $p$-adic closure of $J(\Q )$ in $J(\Q _p )$ is not finite index, and hence that the set $X(\Q _p )_1$ is finite.

Let $x\in X(\Q  )$ and suppose that for all $\omega $ in the space of vanishing differentials, $\int _b \omega $ has a zero of order at least two at $x$. 
Then $\omega $ vanishes at $x$, and hence
\[
\overline{J(\Q )}_{\mathbb{\Q }_p }^{\perp}\subset H^0 (X_{\Q _p },\Omega (x)),
\]
i.e. 
$
\overline{J(\Q )}_{\mathbb{Q}_p }
$
contains the line $H^0 (X_{\Q _p },\Omega (x))^{\perp}$.
Over $\Q $, we have an isogeny 
\[
f=(f_1 ,\ldots ,f_m ):J \to A_1 \times \ldots \times A_m ,
\] 
where for each $i$ either $\overline{A_i (\Q )}$ has finite index in $A(\Q _p )$ or $A_i $ has Mordell--Weil rank at most one. 


The $p$-adic analytic subgroup theorem implies that $\log (A_i (\Q ))$ is either zero or not contained in a proper $\overline{\Q }$-subspace of $\Lie (A_i )_{\C _p }$. For the sake of completeness, we explain in the lemma below why we only need $A_i $ to be simple rather than geometrically simple.

\begin{lemma}\label{easy}
Let $A$ be a simple abelian variety. Suppose $A(\Q )$ is contained in a proper subgroup of $A_{\overline{\Q }}$.Then $A$ has rank zero.
\end{lemma}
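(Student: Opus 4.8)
The plan is to reduce the statement to one about algebraic subgroups defined over $\Q $, since $A$ is only assumed simple over $\Q $ and need not be geometrically simple; once everything is phrased in terms of $\Q $-rational subgroups, simplicity applies directly. Let $Z$ denote the Zariski closure of the subgroup $A(\Q )$ inside $A$, regarded as a $\Q $-scheme. Because $A(\Q )$ is a subgroup, $Z$ is a closed algebraic subgroup of $A$ over $\Q $, and its identity component $Z^0 $ is an abelian subvariety of $A$ defined over $\Q $ (in characteristic $0$ a connected closed subgroup scheme of an abelian variety is again an abelian variety). Since $A$ is simple over $\Q $, either $Z^0 =0$ or $Z^0 =A$.

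The next step is to rule out the case $Z^0 =A$. In that case $Z=A$, so $A(\Q )$ is Zariski dense in $A$; since the formation of the Zariski closure of a $\Q $-rational set of points commutes with the base change to $\overline{\Q }$ (the closure taken over $\overline{\Q }$ is $\Gal (\overline{\Q }/\Q )$-stable and hence descends to a closed subscheme over $\Q $), $A(\Q )$ is then also Zariski dense in $A_{\overline{\Q }}$. But by hypothesis $A(\Q )$ is contained in $H(\overline{\Q })$ for some proper algebraic subgroup $H<A_{\overline{\Q }}$, and $H$ is Zariski closed, so Zariski density forces $A_{\overline{\Q }}\subseteq H$, contradicting properness of $H$. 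Hence $Z^0 =0$, so $Z$ is finite, and therefore $A(\Q )\subseteq Z(\overline{\Q })$ is finite; that is, $A$ has rank zero.

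There is no genuine obstacle here beyond the bookkeeping between $\Q $ and $\overline{\Q }$: the single point to be careful about is that one must work throughout with algebraic subgroups defined over $\Q $ — the Zariski closure $Z$ and its identity component — using only the $\Q $-rational structure, and invoke the $\overline{\Q }$-subgroup $H$ coming from the $p$-adic analytic subgroup theorem solely at the final step, to contradict Zariski density of $A(\Q )$. The facts that formation of the Zariski closure of a $\Q $-rational subset and of the identity component of a group scheme both commute with the extension $\Q \subseteq \overline{\Q }$, and that connected closed subgroup schemes of abelian varieties in characteristic $0$ are abelian subvarieties, are standard and require no further comment.
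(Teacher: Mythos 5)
Your proof is correct, but it follows a genuinely different route from the paper's. The paper works covariantly: starting from a proper algebraic subgroup $H<A_{\overline{\Q }}$ containing $A(\Q )$, it forms the nonzero quotient $f:A_{\overline{\Q }}\to B_{\overline{\Q }}:=A_{\overline{\Q }}/H^0 $, under which the image of $A(\Q )$ is torsion, descends $B$ and $f$ to a finite Galois extension $K|\Q $, and then passes to the adjoint map $g:A\to \Res _{K|\Q }B$ to obtain a nonzero $\Q $-rational homomorphism out of $A$ whose image on $A(\Q )$ is torsion; simplicity of $A$ over $\Q $ makes $g$ an isogeny onto its image, forcing rank zero. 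You instead work contravariantly inside $A$: you take the Zariski closure $Z$ of $A(\Q )$ in $A$ over $\Q $, observe $Z^0 $ is a $\Q $-abelian subvariety, use simplicity to get $Z^0 =0$ or $Z^0 =A$, and then rule out $Z^0 =A$ by the density obstruction supplied by $H$. Your argument is shorter and avoids Weil restriction entirely, at the cost of not producing an explicit $\Q $-rational quotient; the paper's argument is slightly more constructive in that it exhibits a concrete $\Q $-rational map killing the Mordell--Weil group up to torsion. Both use simplicity of $A$ over $\Q $ as the decisive input, and both correctly handle the $\Q $ versus $\overline{\Q }$ bookkeeping. One very minor point: it is worth noting that the finiteness of $A(\Q )$ when $Z$ is finite uses that $A(\Q )\subseteq Z(\Q )$ and that a zero-dimensional closed subgroup scheme of a projective variety is finite, which you implicitly invoke; this is fine.
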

\begin{proof}
Since $A(\Q )$ is contained in a proper subgroup of $A_{\overline{\Q }}$, there is a nonzero homomorphism of abelian varieties over $\overline{\Q }$,
\[
f:A_{\overline{\Q }}\to B_{\overline{\Q }}
\]
such that the image of $A(\Q )$ in $B$ is torsion. We may descend $B_{\overline{\Q }}$ and $f$ to a finite Galois extension $K|\Q $, and we define 
\[
g:A\to \Res _{K|\Q }B
\]
to be the map to the Weil restriction. We claim that the image of $A$ in $B$ has rank zero. To see this, it is enough to check that the image of $A(\Q )$ in $B(\Q )$ is finite. Over $K$, we have 
\[
(\Res _{K|\Q }B)_K =\prod _{\sigma \in \Gal (K|\Q )}B^{\sigma }
\]
and 
\[
g_K =\prod _{\sigma \in \Gal (K|\Q )}f^{\sigma }.
\]
The image of $A(\Q )$ under $f^{\sigma }$ is torsion for all $\sigma $. 
\end{proof}

The isogeny $f$ induces a decomposition
\[
\Lie (J)\simeq \prod \Lie (A_i )
\]
of $\Q $-vector spaces and a decomposition
\[
\overline{J(\Q )}_{\Q _p }\simeq \prod \overline{A_i (\Q )}_{\Q _p }
\]
of $\Q _p $-vector spaces. The inclusion
\[
H^0 (X_{\Q _p },\Omega (x))^{\perp} \subset \prod \overline{A_i (\Q )}_{\Q _p }
\]
implies, that, for all $i$, $f_i \circ \AJ$ induces an inclusion
\begin{equation}\label{line_eqn}
(f_i \circ \AJ )^* (H^0 (X_{\Q _p },\Omega (x)))^{\perp} \subset \overline{A_i (\Q )}_{\Q _p }.
\end{equation}
Let $i$ be such that the Mordell--Weil rank of $A_i $ is less than or equal to one, and is less than $\dim (A_i )$. Then $\overline{A_i (\Q )}_{\Q _p }$ is a line, and $A_i$ is simple. Since $\overline{A_i (\Q )}_{\Q _p }$ is a line, the inclusion \eqref{line_eqn} means that either $(f_i \circ \AJ )^* (H^0 (X_{\Q _p },\Omega (x)))^{\perp}$ is zero or it equals $\overline{A_i (\Q )}_{\Q _p }$. Since $A_i$ is simple, Lemma \ref{easy} implies the latter can never happen.

The map $X\to A_i $ factors though a finite map of curves $X\to X'$ such that $X'\to A$ is a closed immersion. The statement that the image of $H^0 (X,\Omega (x))^{\perp}$ in $\Lie (A_i )$ is zero is equivalent to saying that the map on tangent spaces
\[
T_x X \to T_{(f_i \circ \AJ )(x)}A_i
\]
is zero, hence $X\to X'$ is ramified at $x$. 

By our assumption on $A_i$, $X'$ also satisfies the Chabauty--Coleman hypothesis (or is a rank zero elliptic curve). Hence by induction on the genus we see that $X$ either satisfies case (i) or case (ii) of the theorem.

\subsection{An example}
It is easy to come up with an example where all vanishing differentials do vanish at a rational point. Such examples are perhaps well known to experts, but for completeness we give one explicitly. Let
\[
f(x)=x^4 +a_3 x^3 +a_2 x^2 +a_1 x+a_0 
\]
be a separable polynomial.
Let $X_1 ,X_2 ,X_3 $ be the hyperelliptic curves defined by $xf(x),f(x)$ and $f(x^2 )$ respectively. For $i=1,2$ let $f_i $ be the natural map from $X_3 $ to $X_i$. Then $(f_1 ,f_2 )$ induces an isogeny $\Jac (X_3 )\sim \Jac (X_1 )\times \Jac (X_2 )$.

Suppose $\Jac (X_2 )$ has rank zero and $\overline{\Jac (X_1 )(\Q )}$ has finite index in $\Jac (X_1 )(\Q _p )$ (for some fixed prime $p$). Then the spaces of vanishing differentials for $X_3 $ is exactly equal to $f_2 ^* H^0 (X_2 ,\Omega )$, which is spanned by $xdx/y$. Hence all vanishing differentials vanish at $(0,0)$. 

The above example has the feature that the vanishing differentials are actually defined over $\Q $, and come from a rank zero isogeny factor. With a bit more work we can also find an example where all vanishing differentials vanish at a rational point, and the Jacobian does not contain an isogeny factor of rank zero. Moreover it is actually an example of case (2) of Theorem \ref{thm1}, and so although there is a rational point at which all vanishing differentials are zero, by the $p$-adic analytic subgroup theorem the space of vanishing differentials is \textit{not} defined over $\Q $. As far as we are aware, such an example is not explicitly written down in the literature. 

Let $f_1 (x), f_2 (x)$ be monic polynomials, such that $f_1 (x)f_2 (x)$ is separable. For $i=1,2$, let $X_i $ be the hyperelliptic curve given by $y^2 =f_i (x)$. Let $X_3 $ be the curve given by $y^2 =f_1 (x)f_2 (x)$, and let $X_4 $ be the curve given by 
\begin{align*}
& y_1 ^2 =f_1 (x) \\
& y_2 ^2 =f_2 (x). \\
\end{align*}
For $i=1,2,3$, let $g_i $ be the natural surjection from $X_4$ to $X_i $. Then $(g_1 ,g_2 ,g_3 )$ induces an isogeny
\[
\Jac (X_4 )\sim \Jac (X_1 )\times \Jac (X_2 )\times \Jac (X_3 )
\]
(see \cite[Proposition 2.2]{bruin}).
Now suppose that, for $i=2,3$, $\overline{\Jac (X_i )(\Q )}_{\Q _p }=\Lie (\Jac (X_i ))_{\Q _p }$, and that for $i=1$, the genus of $X_i$ is bigger than 1, but $\Jac (X_1 )$ has rank 1. Then all vanishing differentials are in the image of $g_1 ^* $. The morphism $g_1 $ is ramified at the roots of $f_2 $, and hence if $z\in X_3 (\Q )$ has $x$-coordinate which is a root of $f_2 $, then all vanishing differentials lie in $H^0 (X,\Omega (z))$.

We give an example of a suitable choice of polynomials. Take $f_1 =(x^2 -1)(x-a)(x-b)(x-c)$ and $f_2 =(x+a)(x+b)(x+c)$, where $a,b,c$ are pairwise distinct positive rational numbers not equal to $1$. Then the Jacobian of $X_3 $ is isogenous to the Jacobian of $X_5 \times X_6 $, where $X_5 $ is the hyperelliptic curve given by $y^2 =f_3 (x)=x(x-1 )(x-a^2 )(x-b^2 )(x-c^2 )$, and $X_6 $ is the hyperelliptic curve given by $y^2 =f_4 (x)=(x-1 )(x-a^2 )(x-b^2 )(x-c^2 )$. Hence if $(a,b,c)$ is such that
\begin{align*}
& \rk \Jac (X_1 )=1, & rk \Jac (X_2 )\geq 1 \\
& \rk \Jac (X_5 )\geq 2, & \rk \Jac (X_6 )\geq 1 \\
\end{align*}
(and furthermore $\overline{\Jac (X_5 )(\Q )}$ is finite index in $\Jac (X_5 )(\Q _p ))$, and there is an $x_0 \in \{\pm 1,a,b,c\}$ such that $f_2 (x_0 )=z_0 ^2$ is a square, then $P=(x_0 ,0,z_0 )\in X(\Q )$ will be a rational point at which all vanishing differentials vanish. A computation in magma shows that we may take $(a,b,c)=(12,4,-2)$, giving the curve $X$ defined by 
\begin{align*}
y_1 ^2 =(x^2 -1)(x-12)(x-4)(x+2) \\
y_2 ^2 =(x+12)(x+4)(x-2)
\end{align*}
and $P=(4,0,\pm 16)$.
\section{Bounding the proximity of rational points}
In the same way that the proof of Theorem \ref{thm1} followed from a result in transcendence theory controlling when logarithms of algebraic points on abelian varieties can be contained in proper $\Q $-subspaces of their Lie algebras, Theorem \ref{thm2} follows from the following theorem, which controls how close logarithms of algebraic points can get to proper $\Q $-subspaces.
\begin{theorem}[Fuchs, Duc Pham \cite{fuchs_pham}]\label{thmFP}
Let $G$ be an abelian variety over a number field $K$. Fix a projective embedding of $G$ and let $h$ be the corresponding logarithmic height. Let $L$ be a basis of the Lie algebra of $G$. Let 
\[
\ell :\Lie (G)\to K
\]
be a nonzero functional, and let $\delta _L$ be the minimum of the norm of the coefficients of $\ell$ with respect to the basis $L$. Let $B$ be the maximum of the heights of the coefficients of $\ell$, and put $b:=\max \{\log (B),\log (3) \}$.
There is an effectively computable constant $\omega _L >0$ depending on $L$ and there exists an effectively computable positive real constants $c_4$ independent of $b$ and $p$ with the following property: if $u \in  \Lie (G)(\mathbb{C}_p )$ lies in the open polydisk of radius $p^{-\frac{1}{p-1}}|\delta _L |_p $ centred at zero with respect to the basis $L$, and $\exp (u)$ is an algebraic point in $G(K)$, then $l(u)=0$ or
\[
\log |l(u)|_p > -c_4 \omega _L ^{n+3}bh^n (\log b +\log h)^{n+3}\log p,
\]
where $h:=\max \{ h(\exp (u)),\log (3) \}$.
\end{theorem}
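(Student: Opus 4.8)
The plan is to prove Theorem~\ref{thmFP} by the $p$-adic transcendence method of Gel'fond--Baker type, in the form Hirata--Kohno developed for abelian logarithms, transported to the $p$-adic setting (here $n=\dim G$). One argues by contradiction: assuming $\ell(u)\neq 0$ while $\log|\ell(u)|_p$ is smaller than the asserted bound, one builds an auxiliary function that is simultaneously too small analytically and, by a Liouville estimate, too large arithmetically --- a contradiction --- once four integer parameters are chosen of the right sizes: a degree $D$, a number $S$ of multiples of $\exp(u)$ to interpolate at, a vanishing order $T_0$, and a height bound $T$ for the auxiliary polynomial.

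First I would fix a basis of sections of a suitable power of the very ample line bundle underlying the given projective embedding, and build the auxiliary function along the one-parameter subgroup $\varphi\colon t\mapsto\exp(tu)$ of $G$. The hypothesis that $u$ lies in the polydisk of radius $p^{-1/(p-1)}|\delta_L|_p$ is precisely what makes $\varphi$ converge $p$-adically on the unit disk of the $z$-line, and it is where $\delta_L$ and (through the convergence radius of $\exp$) $\log p$ first enter. By Siegel's lemma over the ring of integers of $K$, choose a nonzero polynomial $P$ of multidegree $\leq D$ and logarithmic height $\leq T$ so that $F(z)=P(\varphi(z))$, set up so as to incorporate the functional $\ell$ (for instance by working on the extension of $G$ by $\mathbb{G}_a$ attached to $\ell$, so that the smallness of $\ell(u)$ is visible in the $\mathbb{G}_a$-coordinate), vanishes to order $\geq T_0$ at $z=0,1,\dots,S$, i.e.\ has high-order zeros at the algebraic points $\exp(u)^s$. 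Solvability forces $D^n\gg S T_0$, and Siegel's lemma then controls $T$ in terms of $D,S,T_0$ and the heights $h$ and $h(\exp(u))$ of the coefficients of the linear system.

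The heart of the argument is the standard dichotomy. Analytically: since $F$ vanishes to order $T_0$ at the $S+1$ points and $p$-adic disks are rigid, a $p$-adic Schwarz/maximum-modulus estimate forces $F$ together with its derivatives of order $<T_0$ to be extremely small at further points $z=S+1,\dots,S'$, the gain being amplified by the tiny factor supplied by $|\ell(u)|_p$. Arithmetically: each such derivative value at an integer $s$ is, after clearing a controlled $p$-power denominator coming from the factorials in $\exp$ and from $\delta_L$ (the second source of $\log p$), an algebraic number of height bounded polynomially in $D,S,T_0,T,h,h(\exp(u))$, so if nonzero it cannot be smaller than a Liouville bound. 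Choosing $S',D,T_0,T$ so the analytic bound beats the Liouville bound forces all these values to vanish, so $F$ acquires zeros of order $T_0$ at $S'+1$ distinct multiples of $\exp(u)$; a multiplicity (zero) estimate for the commutative algebraic group $G$, in the explicit form of Philippon or Masser--W\"ustholz, makes this impossible --- unless the points $\exp(u)^s$ lie in a translate of a proper algebraic subgroup $H$ with $\Lie(H)$ compatible with $P$ and $\ell$, in which case one checks that either $\ell(u)=0$ (excluded) or the bound to be proved follows a fortiori. Either way we reach the contradiction.

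The main obstacle, and where essentially all the work lies, is the explicit bookkeeping needed to extract precisely the shape $\omega_L^{n+3}bh^n(\log b+\log h)^{n+3}\log p$. One must (i) make the zero estimate fully explicit in $D$ and in the geometric invariants of the pair $(G,L)$ --- the constant $\omega_L$ packages the logarithmic height of the formal group law in the coordinates attached to $L$, together with the relevant convergence radii; (ii) track every $p$-adic denominator produced by differentiating along $\varphi$ and by the group law, which is what isolates the factor $\log p$; and (iii) carry out the simultaneous optimization of the four parameters, which is a multi-variable extremal problem whose solution is exactly the stated exponents $n$ and $n+3$ and the linear dependence on $b$. A secondary but real subtlety is keeping the convergence hypothesis in force throughout, so that every $p$-adic analytic manipulation --- the Schwarz lemma in particular --- is valid on the disks actually used, and translating the subgroup alternative of the zero estimate correctly into the conclusion that $\ell$ vanishes on $u$.
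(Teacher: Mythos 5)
Theorem~\ref{thmFP} is not proved in this paper at all: it is cited verbatim from Fuchs and Duc Pham \cite{fuchs_pham} (stated there as their Theorem~2.1), and the paper uses it as a black box. There is therefore no ``paper's own proof'' to compare against. Your sketch does identify the correct genealogy: Fuchs--Pham's result is indeed a $p$-adic transport of Hirata--Kohno's archimedean estimate \cite{HK1}, obtained by the Gel'fond--Baker machinery --- auxiliary function via Siegel's lemma along the one-parameter subgroup $t\mapsto\exp(tu)$, $p$-adic Schwarz/maximum-modulus extrapolation, Liouville lower bounds, a Philippon/Masser--W\"ustholz multiplicity estimate, and a final parameter optimization --- and you correctly locate where $\delta_L$, $|\cdot|_p$, and the factor $\log p$ enter (the convergence radius of $\exp$ and the $p$-adic denominators from differentiating the formal group law).

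That said, what you have written is a roadmap, not a proof: the exponents $n$ and $n+3$, the linear dependence on $b$, and the precise role of $\omega_L$ all live in the quantitative part you defer to ``bookkeeping,'' which is essentially the entire content of the Fuchs--Pham paper. Two specific points to be careful about if you ever fill this in. First, the subgroup alternative at the end is more delicate than you indicate: when the zero estimate produces a proper subgroup $H$, one must show $\Lie(H)\subset\ker\ell$ \emph{and} $\exp(u)\in H(\overline{\Q})$ before concluding $\ell(u)=0$; this is exactly where the qualitative analytic subgroup theorem (Fuchs--Pham \cite{fuchs_pham_subgroup}, Matev \cite{matev2010p}) is reproved quantitatively, and it is not automatic. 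Second, in the $p$-adic setting there is no ``Schwarz lemma'' in the complex-analytic sense; the extrapolation step uses a careful Newton-polygon / maximum-modulus argument on affinoid disks, and the validity of each step depends on staying strictly inside the polydisk of radius $p^{-1/(p-1)}|\delta_L|_p$, a constraint you mention but would need to enforce at every extrapolation. Since the paper simply quotes the theorem, the expected answer here is a citation, not a reconstruction of the Fuchs--Pham argument.
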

This theorem is a $p$-adic analogue of theorems of Hirata--Kohno \cite{HK1} and Gaudron \cite{gaudron} which bound the (Euclidean) distance of the complex logarithm of an algebraic point from a vector subspace of $\Lie (J)_{\mathbb{C}}$ which descends to $\overline{\Q }$.

Hirata--Kohno has used the Archimedean version of this theorem (proved in \cite{HK1}) to bound the height of integral points on curves in terms of the regulator of the Jacobian \cite{HK2}. However, in general it is difficult to use this to control the height of \textit{rational} points because the bounds on the proximity of logarithms of algebraic points are in terms of the heights of the points.

The fundamental difference in the $p$-adic case is that, when the rank is less than the genus, the $p$-adic logarithms of Mordell--Weil generators place non-trivial controls on the $p$-adic  logarithms of \textit{all} algebraic points. Somewhat more precisely, the idea of the application to rational points is that, if we have rational points very close to $x$, then
$
\overline{J(\Q )}_{\mathbb{Q}_p }^{\perp}$ is very close to $H^0 (X,\Omega (x))$, or dually $\overline{J(\Q )}_{\mathbb{Q}_p }$ is very close to $H^0 (X,\Omega (x))^{\perp}$.

We now give the proof of Theorem \ref{thm2}. The proof involves the following elementary Newton Polygon translation between proximity of $y\in X(\Q _p )_1$ to $x\in X(\Q )$ and proximity of the vanishing differential to the subspaces $H^0 (X_{\Q _p },\Omega (-n\cdot x))$.

Let $X_{\mathbb{Z}}$ be the minimal proper regular model of $X$ over $\mathbb{Z}$. The N\'eron model of its Jacobian, which we denote by $J_{\mathbb{Z}}$, is then $\Pic ^0 (X_{\mathbb{Z}})$, by \cite[Theorem 9.5.1]{BLR}. Hence the basis $L$ of $H^1 (X_{\mathbb{Z}},\mathcal{O}_X )$ gives a basis of $\Lie (J_{\mathbb{Z}})$.
\begin{lemma}\label{lem1}
Let $m\geq 0$ be the largest integer such that $\omega $ vanishes to order $\geq m$ at $x$ for all vanishing differentials $\omega $. Suppose $x,y\in X(\Q _p )_1$ are congruent modulo $p^n$. Then, for any vanishing differential $\omega \in H^0 (X_{\Q _p },\Omega )$ which lies in the subspace $H^0 (X_{\mathbb{Z}_p },\Omega )$, the $p$-adic distance of $\omega $ from \\ $H^0 (X_{\Q _p },\Omega (-(m+1)\cdot x))$ is at most $p^{-n}$.
\end{lemma}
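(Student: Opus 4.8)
The plan is to translate the congruence $x \equiv y \pmod{p^n}$ into a statement about the power series expansion of a Coleman integral on a residue disk, and then read off the conclusion from a Newton polygon argument. First I would pick a vanishing differential $\omega \in H^0(X_{\Z_p},\Omega)$ and write $\eta = \int_b \omega$ for its Coleman primitive. Since both $x$ and $y$ lie in $X(\Q_p)_1$, we have $\eta(x) = \eta(y) = 0$. Working in a coordinate $t$ on the residue disk of $x$ coming from the integral model (so that $t(x) = 0$ and $t(y) \equiv 0 \pmod{p^n}$ because $x$ and $y$ agree modulo $p^n$ in $X(\Z/p^n\Z)$ for the minimal proper regular model), the integrality of $\omega$ on $H^0(X_{\Z_p},\Omega)$ guarantees that $\eta(t) = \sum_{k\ge 1} a_k t^k / k$ (or rather the Coleman integral of a power series with $p$-integrally bounded coefficients) has coefficients whose valuations are bounded below in the standard way. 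The key point is that $\eta$ vanishes at two points $t = 0$ and $t = t(y)$ of the disk with $v_p(t(y)) \ge n$.

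The second step is the Newton polygon translation. Let $r$ be the order of vanishing of $\eta$ at $x$, i.e.\ $\eta(t) = a_r t^r/r + (\text{higher order})$ with $a_r \ne 0$; equivalently $\omega$ vanishes to order exactly $r - 1$ at $x$, so $\omega \in H^0(X_{\Q_p},\Omega(-(r-1)\cdot x))$ but not in $H^0(X_{\Q_p},\Omega(-r\cdot x))$. Since $\eta$ has a zero at $t(y) \ne 0$ in addition to the zero of order $r$ at $t = 0$, the Newton polygon of $\eta(t)/t^r$ must have a segment accounting for that extra root, which forces $v_p(a_{r+1}/(r+1)) - v_p(a_r/r) \le -v_p(t(y)) \le -n$ (after absorbing the elementary bound $v_p((r+1)/r) = O(1)$ coming from $m_p$ and the genus, which is where the dependence on $m_p$ and $g$ in the ambient theorem enters, though for this lemma it is cleaner to state the bound as $p^{-n}$ directly). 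In other words, the ratio of the two leading coefficients of $\eta$ at $x$ is large, which says precisely that the projection of $\omega$ onto the one-dimensional quotient $H^0(X_{\Q_p},\Omega(-r\cdot x))^\perp$ inside $H^0(X_{\Q_p},\Omega(-(r-1)\cdot x))$ — i.e.\ the ``$(r+1)$st Taylor coefficient'' direction — is small relative to its image in $H^0(X_{\Q_p},\Omega(-(r-1)\cdot x))$. Applying this with $r = m+1$ (the generic order of vanishing common to \emph{all} vanishing differentials), we get that $\omega$ is within $p^{-n}$ of $H^0(X_{\Q_p},\Omega(-(m+1)\cdot x))$ in the $p$-adic distance induced by $L$.

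The third step is just bookkeeping: one must check that the distance computed from the Taylor coefficients on the residue disk agrees, up to the normalisation built into the choice of $L$, with the distance on $H^0(X_{\Q_p},\Omega)$ appearing in the statement; this uses that $L$ is a $\Z$-basis of $H^1(X_{\Z},\mathcal O_X)$ and duality with $H^0(X_{\Z},\Omega)$, so that the filtration by order of vanishing at $x$ and the metric are both defined integrally. The main obstacle I anticipate is controlling the denominators $k$ in the Coleman primitive $\sum a_k t^k/k$ and, relatedly, making sure the residue-disk coordinate $t$ is genuinely compatible with the $\Z/p^n\Z$-congruence for the \emph{minimal proper regular model} (as opposed to some other model) — i.e.\ that $x \equiv y \pmod{p^n}$ really does give $v_p(t(y)) \ge n$ with no loss. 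This is a standard but slightly delicate point about integral models and residue disks; once it is in place, the Newton polygon estimate is routine and the bound $p^{-n}$ drops out. Everything else is elementary.
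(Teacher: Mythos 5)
Your overall strategy matches the paper's: expand the Coleman integral in an integral parameter $t$ on the residue disk of $x$, observe it has a zero at $t=0$ and another at $t(y)$ with $v_p(t(y))\geq n$, and read off a valuation bound on the first non-vanishing Taylor coefficient via the Newton polygon, using integrality of $\omega$ to control denominators. That is the right idea and the right set of ingredients.

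However, the Newton polygon step as you have written it is incorrect. From the existence of a root of $\eta(t)/t^r$ of valuation $\geq n$ you conclude $v_p(a_{r+1}/(r+1))-v_p(a_r/r)\leq -n$, i.e.\ that the first horizontal step of the polygon already has slope $\leq -n$. What the root actually gives you is that the leftmost \emph{side} of the polygon has slope $\leq -n$, but that side may run from $(0,v_p(b_0))$ to $(k,v_p(b_k))$ for some $k>1$; the quantity $v_p(b_1)-v_p(b_0)$ is only an upper bound for points \emph{above} the polygon, not a lower one, and can be arbitrarily positive. (Concretely: $b_0+p^{10}t+p^{-10}t^2$ has a root of valuation $5$ but $v_p(b_1)-v_p(b_0)=10$.) The paper avoids this by working directly with the endpoint $(k,v_p(a_k))$ of the slope-$\leq -n$ part, so that the convexity inequality $v_p(a_{m+1})\geq v_p(a_k)+(k-(m+1))\cdot n$ is valid, and then bounding $v_p(a_k)\geq -v_p(k)$ from integrality of $dF/dt$. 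You should restructure your slope estimate in the same way; once you do, your framing of ``the projection onto the $t^{r-1}\,dt$-direction is small'' becomes a genuine absolute bound (via $v_p(a_{r+k})\geq 0$) rather than a relative one, which is what the lemma requires. Separately, the parenthetical remark that the $v_p((r+1)/r)$ correction ``comes from $m_p$'' is spurious: $m_p$ plays no role in this lemma, and the loss $v_p(k)$ is controlled because the order of vanishing of a holomorphic differential at a point is at most $2g-2$, so $k$ is bounded in terms of $g$ alone.
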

\begin{proof}
Without loss of generality we may rescale $\omega $ so that it does not vanish in $H^0 (X_{\F _p },\Omega )$ (i.e. so that it isn't a multiple of $p$) since this can only increase the $p$-adic distance. Let $t$ be an integral parameter at $x$. Write
\[
F:=\int _x ^z \omega =\sum _{n>0} a_n t^n (z).
\]
Since $F$ has a zero of valuation at least $n$, the slope $\leq -n$ part of the Newton polygon of $F$ has length $k\geq 2$. Let $(k,v_p (a_k ))$ denote the endpoint of the slope $\leq -n$ part of the Newton polygon of $F$. Since $dF/dt \in \mathbb{Z}_p [\! [t]\! ]$, we must have 
$
v_p (a_k )\geq -v_p (k),
$ hence $v_p (a_1 )\geq (k-1)n -v_p (k)\geq n$.
\end{proof}
Let $x\in X(\Q )$ be as in Theorem \ref{thm2}. Let $m \geq 0$ be the maximum multiplicity to which all vanishing differentials vanish at $x$. Suppose $y\in X(\Q _p )_1$ is congruent to $x$ modulo $p^N$. then by Lemma \ref{lem1}, any $\mathbb{Z}_p$-integral vanishing differential has distance at most $p^{-N}$ from $H^0 (X_{\Q _p },\Omega (-(m+1)x))$. 
%

Given a $\mathbb{Z}_p$ -submodule $B$ of a $\mathbb{Z}_p$-module $A$, we define $B^{p-\sat }$ to be the $p$-saturation of $B$ in $A$, i.e. the group of $x\in A$ such that $p^n \cdot x \in B$ for some $n\geq 0$).
Let 
\[
B:=(f_1 ^{-1}( H^0 (X_{\Q _p },\Omega (-(m+1)\cdot x)))^{\perp})^{p-\sat } \subset \Lie (A_1 )_{\Z _p }
\] denote the $p$-saturation of $f_1 ^{-1}(H^0 (X_{\Q _p },\Omega (-(m+1)\cdot x)))^{\perp}\cap \Lie (A_1 )_{\mathbb{Z}_p }$ in $\Lie (A_1 )_{\mathbb{Z}_p }$. Let $r_{A_1 ,n}$ and $r_{J,n}$ denote the reduction mod $p^n$ maps
\begin{align*}
& r_{A_1 ,n}:\Lie (A_1 )_{\mathbb{Z}_p }\to \Lie (A_1 )_{\mathbb{Z}_p }\otimes \mathbb{Z}/p^n \mathbb{Z} \\
& r_{J ,n}:\Lie (J )_{\mathbb{Z}_p }\to \Lie (J )_{\mathbb{Z}_p }\otimes \mathbb{Z}/p^n \mathbb{Z}
\end{align*} 
Since $\overline{A_1 (\Q )}$ is a line, we must have 
\begin{equation}\label{do_u_need_this}
r_{A_1 ,N}(f_1 (\overline{J(\Q )}^{p-\sat })) = r_{A_1 ,N}(B).
\end{equation}
Let $C=f_{1*}B \subset \Lie (J)_{\mathbb{Z}_p }$. Then \eqref{do_u_need_this} implies
\begin{equation}\label{eqn:rJN}
r_{J,N}(\overline{J(\Q )}_{\Q _p }\cap \Lie (J)_{\mathbb{Z}_p }) \subset r_{J,N}(C).
\end{equation}
Let $P$ be a point of $A_1 $ of infinite order. 
Hence, if $P$ is in $J ^1 (\Q _p ):=\Ker (J (\Q _p )\to J (\F _p ))$ and has height $h(P)$, then by \eqref{eqn:rJN}, together with Theorem \ref{thmFP}, we have
\[
N <c_4 \omega _L ^{n+3}bh(P)^n (\log b +\log h(P))^{n+3},
\]
where $b$ is the height of the a hyperplane in $\Lie (J)$ containing $C_{\Q }:=f_{1 *} f_1 ^{-1}(H^0 (X,\Omega (-(m+1)\cdot x))^{\perp})$ .

We now bound the height of $C_{\Q }$. For an $e$-dimensional $\Q $-subspace $W$ of $\Lie (J)_{\Q _p }$, let $h(W)$ denote the height of $[W]$ in the Grassmannian of $e$-dimensional subspaces of $\Q ^g$ (using our chosen basis of $H^1 (X,\mathcal{O})$). Note that there is a constant $c$ depending only on $g$ such that, for any subspace $W_1 ,W_2$ of $\Q ^g$, we have $h(W_1 +W_2 )\leq h(W_1 )+h(W_2 )+c$. In particular, we deduce there is a constant $c_1$ depending only on $g$ such that
\[
b\leq h(C_\Q )+c_1 \leq h(H^0 (X_{\Q },\Omega (-(m+1)\cdot x)))^{\perp}+\sum _{i=2}^n h(f_i (\Lie (A_i ))),
\]
since 
\[
C_{\Q }=H^0 (X,\Omega (-(m+1)\cdot x))^{\perp}+\sum _{i=2}^n f_i (\Lie (A_i ))
\]

By the Masser--W\"ustholz tangent space lemma \cite[\S 8]{MW93}, there is an effectively computable constant $c_5$ depending only on $g$ such that
\[
h(f_i (\Lie (A_i )))<c_5 \max \{ 1,h_F (J),\log (d_i )\},
\]
(recall $d_i $ is the degree of $A_i $ in $J$). On the other hand, there is a constant $c_6 $ depending on the choice of basis of $H^1 (X,\mathcal{O})$ and $c_7 $ depending only on $g$ such that
\[
h(H^0 (X,\Omega (-(m+1 )\cdot x)))\leq c_7 h(x)+c_6.
\]
Hence if $P\in J(\Q )\cap J^1 (\Q _p )$ is such that no non-zero multiple of $P$ lies in $\prod _{i=2}^n A_i $, then there are effectively computable constants $c_8, c_9$ such that
\[
N <c_8 \omega _L ^{g+3}(h(x)+c_9 \kappa )h(P)^n (\log (h(x)+c_9 \kappa ) +\log h(P))^{g+3}.
\]

The only remaining point is to bound the minimum height of a point in $J ^1 (\Q _p )\cap J(\Q )$ such that no non-zero multiple of it lies in $\prod _{i=2}^n A_i$ in terms of the heights of Mordell--Weil generators of $J(\Q )$. Our approach is simply to bound the heights of $m_p \cdot P_0$, where $P_0$ is a Mordell--Weil generator of $J$ and $m_p :=\# J (\F _p )$ is the number of $\F _p $-points of the N\'eron model of $A_1 $. By a theorem of Silverman and Tate \cite[Theorem A]{silverman}, there are effectively computable constants $c_{10} ,c_{11}  $ depending only on $g$ such that 
\[
h(m_p \leq \cdot P_0 )\leq m_p ^2 h(P_0 )+c_{10} \cdot h_{\Theta } (J)+c_{11} ,
\]
where $h_{\Theta }$ denotes the theta height of $J$. On the other hand, by Pazuki \cite{pazuki}, there are effectively computable constants $c_{12} ,c_{13}$ such that
\[
h_{\Theta }(J)\leq c_{12} h_F (J)+c_{13} .
\]
Hence we deduce \eqref{eqn:main_ineq}, and Theorem \ref{thm2}.

\section{Wieferich statistics}
We first review the setting and statement of Katz's conjecture on the position of the mod $p$ logarithm of a point on an abelian variety. Let $G$ be an abelian variety over $\mathbb{Z}_p $. Then we have an exact sequence
\[
0\to \Lie (G)\otimes \mathbb{Z}/p\mathbb{Z} \to G(\mathbb{Z}/p^2 \mathbb{Z})\to G(\mathbb{Z}/p\mathbb{Z})\to 0.
\]
Hence, for any $P\in G(\mathbb{Z}_p )$ with image $\overline{P}$ in $G(\mathbb{Z}/p^2 \mathbb{Z})$, we obtain 
\[
W_P (p):=n_p \overline{P}/p \in \frac{1}{p}\Lie (G)/\Lie(G),
\]
where $n_p :=\# G(\F _p )$.
We write $pW_P (p)$ to mean $n_p \overline{P} \in \Lie (G)\otimes \mathbb{Z}/p\mathbb{Z}$.
\begin{lemma}[\cite{poonen_torsion}, Lemma 7]\label{poonen_lemma}
For $P\in G(\mathbb{Z}_p )$  mapping to $\overline{P}\in G(\mathbb{Z}/p^2 \mathbb{Z})$, we have
\[
\frac{n_p }{p}\log _G (P)\equiv pW_P (p) \mod p\Lie (G).
\]
\end{lemma}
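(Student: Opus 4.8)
The plan is to deduce the congruence from a comparison between the formal logarithm of $G$ and the connecting map of the displayed four-term exact sequence. Write $G^1(R):=\Ker\bigl(G(R)\to G(\F _p)\bigr)$ for $R=\Z _p$ and for $R=\Z /p^2\Z $, and let $\hat G$ be the formal group of the abelian scheme $G/\Z _p$, so that $G^1(\Z _p)=\hat G(p\Z _p)$ and the exact sequence identifies $G^1(\Z /p^2\Z )$ with $\Lie (G)\otimes \Z /p\Z $. First I would note that, since $n_p=\# G(\F _p)$, the point $n_pP$ reduces to $0$ in $G(\F _p)$, so $n_pP\in G^1(\Z _p)$; as $\log _G$ is a homomorphism, vanishing on torsion and whose restriction to $G^1(\Z _p)$ is the formal logarithm $\log _{\hat G}\colon \hat G(p\Z _p)\to \Lie (G)_{\Q _p}$, we get $\tfrac{n_p}{p}\log _G(P)=\tfrac1p\log _{\hat G}(n_pP)$. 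Thus it suffices to show that for every $Q\in G^1(\Z _p)$ the element $\tfrac1p\log _{\hat G}(Q)$ lies in $\Lie (G)$ and reduces modulo $p$ to the image of $\overline Q\in G^1(\Z /p^2\Z )$ under the identification above with $\Lie (G)\otimes \Z /p\Z $; taking $Q=n_pP$ then gives the lemma, because $\overline{n_pP}=n_p\overline P$ and $n_p\overline P$ maps by definition to $pW_P(p)$.

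For this remaining point I would fix a $\Z _p$-basis of $\Lie (G)$ together with the corresponding formal coordinates $T_1,\dots ,T_g$ on $\hat G$, so that $\log _{\hat G}(T)_i=T_i+(\text{higher-order terms})$ with coefficients in $\Q _p$. The standard estimate on the $p$-adic denominators of the coefficients of a formal-group logarithm shows that, for $p$ odd, any monomial of degree $\ge 2$ in the $T_j$ evaluated at an argument all of whose coordinates lie in $p\Z _p$ takes values in $p^2\Z _p$. Hence for $Q\in G^1(\Z _p)$ with coordinates $(s_1,\dots ,s_g)\in (p\Z _p)^g$ one has $\log _{\hat G}(Q)\equiv (s_1,\dots ,s_g)\pmod{p^2\Lie (G)}$, so $\tfrac1p\log _{\hat G}(Q)\in \Lie (G)$ and reduces modulo $p$ to $(s_1/p,\dots ,s_g/p)\bmod p$. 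On the other hand, unwinding the exact sequence in these coordinates, the isomorphism $G^1(\Z /p^2\Z )\isoarrow \Lie (G)\otimes \Z /p\Z $ sends the class of $Q$, which has coordinates $(s_i\bmod p^2)$, to precisely $(s_i/p\bmod p)_i$, matching the previous computation.

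The step I expect to be the main obstacle is the bookkeeping in the second paragraph: pinning down the connecting isomorphism of the sequence $0\to \Lie (G)\otimes \Z /p\Z \to G(\Z /p^2\Z )\to G(\Z /p\Z )\to 0$ as the ``divide the formal coordinates by $p$'' map in a chosen coordinate system, and---closely related---checking the $p$-integrality of the higher-degree part of $\log _{\hat G}$. The latter is precisely what forces the restriction to $p$ odd: for $p=2$ the coefficient $-\tfrac12$ of the quadratic term already obstructs the mod-$p^2$ vanishing, so one would either argue separately or simply exclude $p=2$, which is harmless since in the intended applications one only cares about primes outside a finite set. Everything else is formal, using only that $\log _G$ is a homomorphism agreeing with $\log _{\hat G}$ on $G^1(\Z _p)$ and that $n_p$ annihilates $G(\F _p)$.
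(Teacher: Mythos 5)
The paper does not prove this lemma; it is stated with the attribution ``[\cite{poonen_torsion}, Lemma~7]'' and used as a black box, so there is no in-paper proof to compare against. Your reconstruction is correct and is almost certainly the same argument as in the cited source: reduce to the formal group via $\tfrac{n_p}{p}\log_G(P)=\tfrac1p\log_{\hat G}(n_pP)$, use $\Z_p$-integrality of $d\log_{\hat G}$ (equivalently, that the coefficient of a degree-$n$ monomial has $p$-adic valuation $\ge -v_p(n)$, which for $p$ odd gives $n-v_p(n)\ge 2$ for all $n\ge 2$) to see that $\log_{\hat G}(Q)\equiv Q$ modulo $p^2\Lie(G)$ for $Q\in\hat G(p\Z_p)$, and then observe that in formal coordinates the connecting isomorphism $G^1(\Z/p^2\Z)\isoarrow\Lie(G)\otimes\Z/p\Z$ is exactly ``divide by $p$,'' because the formal group law is additive to leading order on $(p\Z/p^2\Z)^g$. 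Setting $Q=n_pP$ and unwinding the definition of $pW_P(p)$ gives the congruence. Your flagging of $p=2$ is the right caveat: the estimate $n-v_p(n)\ge 2$ fails at $n=2$ when $p=2$, so one must exclude $p=2$ (or give an ad hoc argument), which is harmless in the paper's applications since one always works at a prime of good reduction chosen freely from a cofinite set. Two very minor points of hygiene: the reduction to the formal logarithm uses only that $\log_G$ is a homomorphism into a torsion-free $\Q_p$-vector space agreeing with $\log_{\hat G}$ on $G^1(\Z_p)$ (your ``vanishing on torsion'' is a consequence, not an extra input); and for the multivariable denominator bound the cleanest phrasing is that $d\log_{\hat G}$ has coefficients in $\Z_p$, from which integrating gives the valuation bound you invoke.
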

%
%
Now let $A$ be an abelian variety over $\Q $.
Let $\mathcal{A}/\mathbb{Z}$ denote the N\'eron model of $G$. The following is a special case of a conjecture of Katz.

\begin{conjecture}[Katz, \cite{katz}]
If $G$ is a geometrically simple group variety, and $P$ is a point of infinite order, then the mod $p$ logarithm $\overline{\log }_p (P)$ is equidistributed in $\Lie (G_{\mathbb{Z}})\otimes \mathbb{R}/\mathbb{Z}$ as $p$ varies over primes of good reduction.
\end{conjecture}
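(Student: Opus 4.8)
The plan is to prove equidistribution through the Weyl criterion, reducing the conjecture to a cancellation estimate for sums of $p$-th roots of unity over primes, and then to locate precisely the arithmetic input that is missing.

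\textbf{Setting up the Weyl sums.} Write $T:=\Lie(G_{\mathbb{Z}})\otimes\mathbb{R}/\mathbb{Z}$, a real torus of dimension $g=\dim G$, with character lattice canonically identified with $\Lie(G_{\mathbb{Z}})^{\vee}$. By Weyl's criterion it suffices to prove, for every nonzero $\ell\in\Lie(G_{\mathbb{Z}})^{\vee}$,
\[
\frac{1}{\#\{\,p\le N:\ p\ \mathrm{good}\,\}}\sum_{\substack{p\le N\\ p\ \mathrm{good}}}e\!\left(\big\langle\ell,\overline{\log}_p(P)\big\rangle\right)\ \longrightarrow\ 0\qquad(N\to\infty),
\]
where $e(t):=e^{2\pi i t}$. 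By Lemma \ref{poonen_lemma} the rescaled mod $p$ logarithm is the class $W_P(p)=n_p\overline{P}/p\in\tfrac1p\Lie(G)/\Lie(G)$ with $n_p:=\#G(\F_p)$, so pairing with $\ell$ gives $\langle\ell,\overline{\log}_p(P)\rangle\equiv a_p(\ell)/p\pmod 1$ where $a_p(\ell):=\langle\ell,n_p\overline{P}\rangle\bmod p\in\Z/p\Z$. Hence the Weyl sum is $\sum_{p\le N}e_p(a_p(\ell))$ with $e_p(a):=e^{2\pi i a/p}$, and the whole statement reduces to proving $\sum_{p\le N,\ \mathrm{good}}e_p(a_p(\ell))=o(\pi(N))$ for each nonzero $\ell$.

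\textbf{Identifying $a_p(\ell)$.} Two preliminary reductions make this target concrete. First, by the $p$-adic analytic subgroup theorem in the form used in the proof of Theorem \ref{thm1} (via Lemma \ref{easy}), geometric simplicity of $G$ forces $\langle\ell,\log_G(P)\rangle$ to be a nonzero element of $\overline{\Q}$, hence nonzero in $\mathbb{C}_p$ for every good $p$; consequently no individual term $e_p(a_p(\ell))$ is forced to equal $1$ and the prime sum carries no spurious main term. Second, expanding $\log_G$ as a $p$-adically convergent power series in formal-group coordinates of $P$ shows that $a_p(\ell)$ is, up to the unit $n_p/p$, the residue mod $p$ of an explicit arithmetic function of $P\bmod p^2$ and $n_p\bmod p$. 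In the model case $G=\mathbb{G}_m^g$, $P=(\alpha_1,\dots,\alpha_g)\in(\Q^{\times})^g$, one computes $a_p(\ell)\equiv\sum_j c_j\,q_p(\alpha_j)\bmod p$, where $q_p(\alpha)=(\alpha^{p-1}-1)/p\bmod p$ is the Fermat quotient and $c_j\in\Z$ are the coordinates of $\ell$; so Katz's conjecture contains the equidistribution of integer combinations of Fermat quotients as its simplest instance, and this is exactly the analytic phenomenon to be established. For abelian $G$ there is the extra feature that $n_p$ is itself only equidistributed (by Chebotarev / vertical Sato--Tate), which one might hope to exploit, but the product of the two sources of ``randomness'' does not obviously simplify the sum.

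\textbf{The attack and the main obstacle.} To wring cancellation out of $\sum_{p\le N}e_p(a_p(\ell))$ I would detect $a_p(\ell)$ by additive characters modulo $p$ and open the resulting congruence, aiming after Cauchy--Schwarz at a bilinear (Type II) sum to which the large sieve or the dispersion method applies; alternatively one could try to describe $a_p(\ell)$ Galois-theoretically through the action of Frobenius at an auxiliary prime on a Kummer-type tower $\Q(G[p^2],p^{-2}P)$ and then deduce equidistribution by combining Chebotarev's theorem with an $\ell$-adic big-monodromy statement in the spirit of proofs of vertical Sato--Tate laws. The decisive difficulty---and the reason the conjecture is open---is precisely here: the ``numerator'' $a_p(\ell)$ varies with $p$ through $P\bmod p^2$ and $n_p$ in a way for which no averaging over a natural family is available, and the bilinear structure needed to activate the large sieve or the circle method is not visible. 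Even the smallest case, the equidistribution of $q_p(2)$, is unknown: the state of the art (Heath-Brown's large-sieve bound on the number of $p\le N$ with $q_p(2)$ in a short interval, and subsequent refinements) falls far short of full equidistribution. I therefore expect the missing ingredient to be arithmetic rather than analytic---realizing $a_p(\ell)$ as the specialization of a lisse sheaf over $\spec\Z[1/N]$ whose geometric monodromy can be computed---and that, absent such an input, only conditional or partial results (positive-proportion or short-interval versions) are within reach.
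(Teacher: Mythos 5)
The statement you were asked about is Katz's conjecture: the paper records it as a \emph{conjecture}, gives no proof, and indeed regards statements of this type as far out of reach (it remarks that even much weaker randomness assertions about mod-$p$ logarithms are ``presumably very difficult to prove''). So there is no proof in the paper to compare yours against, and your write-up is consistent with that reality: it is a reduction of the conjecture, via Weyl's criterion, to cancellation in prime sums of the residues $a_p(\ell)$ (integer combinations of Fermat quotients in the $\mathbb{G}_m$ model case), followed by an accurate diagnosis of why that cancellation is unavailable --- already the equidistribution of $q_p(2)$ is open. This is a reasonable strategic discussion, but it is not, and does not claim to be, a proof of the statement, and it cannot be accepted as one.

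Two corrections to the discussion itself. First, the appeal to the $p$-adic analytic subgroup theorem is misused: for $\ell$ defined over $\overline{\Q}$ and $P$ of infinite order on a geometrically simple $G$, the theorem gives that $\langle\ell,\log_G(P)\rangle$ is a nonzero element of $\mathbb{C}_p$ --- it is not an algebraic number --- and, more importantly, nonvanishing in $\Q_p$ does not prevent $a_p(\ell)\equiv 0 \bmod p$: primes with $a_p(\ell)\equiv 0$ are exactly the Wieferich-type primes, whose scarcity is part of what the conjecture asserts, so you cannot use this to discard a ``spurious main term''; at best it rules out the degenerate situation where the term vanishes identically for an algebraic reason. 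Second, the identification $\langle\ell,\overline{\log}_p(P)\rangle\equiv a_p(\ell)/p \bmod 1$ rests on Lemma \ref{poonen_lemma} together with Katz's normalisation of the mod-$p$ logarithm (the rescaling by $n_p/p$ and the passage to the class $W_P(p)\in\frac{1}{p}\Lie(G)/\Lie(G)$); if this outline were ever developed, that translation between the conjecture as stated (equidistribution in $\Lie(G_{\Z})\otimes\mathbb{R}/\mathbb{Z}$ as $p$ varies over good primes) and the Weyl sums you write down would need to be made precise rather than asserted.
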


\subsection{Wieferich statistics and effective Mordell}
It is not clear whether Katz's conjecture has direct applications to rational points on curves. However, by Theorem \ref{thm2}, stronger forms of Katz's conjecture, or other more quantitative conjectures about the Wieferich statistics of points on the Jacobian, as we now explain.

%
Let $X$ be a smooth projective geometrically irreducible curve of genus $g>2$ over $\Q$, with Jacobian $J$. Let $A$ be a geometrically simple abelian variety admitting a surjection $\pi :J\to A$ defined over $\Q$. Let $\{ b_1 ,b_2 \} \subset X(\Q )$ be rational points such that $b_1 -b_2 $ is torsion.

\begin{theorem}\label{thm3}
Suppose that the rank of $A$ is less than $\dim (A)-1$. Let $\mathcal{A}$ denote the N\'eron model of $A$. Suppose that both of the following two hypotheses hold.
\begin{enumerate}
\item For any corank two submodule $S$ of $\Lie (\mathcal{A})$, and any point $P\in A(\Q )$ of infinite order, there is an effectively computable constant $c$ such that, for all $p>c$, $W_p (P)$ does not lie in $S$.
\item For any prime $p$, there is an effectively computable lower bound on the $p$-adic distance from a point $x\in X(\Q )-\{ b_1 ,b_2 \}$ to $\{b_1 ,b_2 \}.$
\end{enumerate}
Then there is an effectively computable bound on the height of rational points in $X(\Q )$, in terms of the regulator of the $A$.
\end{theorem}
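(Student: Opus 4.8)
The plan is to run the Chabauty--Coleman method for the quotient $A$, pulling differentials back along $\pi$, with hypotheses (1) and (2) playing the role that the transcendence input of Theorem \ref{thmFP} plays in the proof of Theorem \ref{thm2}. Since $\pi\colon J\to A$ is surjective and $\operatorname{rank}(A)<\dim A$, the image of $X(\Q)$ under $\log_p\circ\pi\circ\iota_{b_1}$ lies in the proper subspace $\overline{A(\Q)}_{\Q_p}\subset\Lie(A)_{\Q_p}$, so $X(\Q)$ lies in the finite set $X(\Q_p)_{1,A}$ cut out by the vanishing of $\int_{b_1}\omega$ for $\omega$ in the space of vanishing differentials $\pi^{\ast}W_A$, where $W_A=\overline{A(\Q)}_{\Q_p}^{\perp}\subset H^0(A_{\Q_p},\Omega)$. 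The hypothesis $\operatorname{rank}(A)<\dim A-1$ forces $\dim W_A\geq 2$; this extra dimension is what will eventually feed hypothesis (1).

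Next I would prove the analogue of Lemma \ref{lem1} for $A$: for $p$ of good reduction and larger than an effective bound depending only on $g$ and the bad primes, if $x\in X(\Q)-\{b_1,b_2\}$ is congruent to $b_1$ modulo $p^N$ then every $\Z_p$-integral vanishing differential of $A$ has $p$-adic distance $\leq p^{-N}$ from $H^0(X_{\Q_p},\Omega(-(m+1)b_1))$, where $m\geq 0$ is the common order of vanishing of these differentials at $b_1$ (and symmetrically for $b_2$). Dualising inside $\Lie(\mathcal A)_{\Z_p}$ and passing to $p$-saturations as in the proof of Theorem \ref{thm2}, this should yield a corank-$\geq 2$, $p$-saturated submodule $C=C_{b_1}\subset\Lie(\mathcal A)_{\Z_p}$, defined over $\Q$ and hence independent of $p$, with $r_N(\overline{A(\Q)}_{\Q_p}\cap\Lie(\mathcal A)_{\Z_p})\subseteq r_N(C)$; in particular $\log_p(P)$ for a fixed Mordell--Weil generator $P$ of $A$ of infinite order (multiplied into the kernel of reduction by $m_p:=\#\mathcal A(\F_p)$ if necessary, as in the final step of the proof of Theorem \ref{thm2}) is constrained modulo $p^N$ to the reduction of $C$. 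Translating via Lemma \ref{poonen_lemma} between $\log_p$ and $W_p(P)$, hypothesis (1) applied to $S=C_{b_1}$ and $S=C_{b_2}$ then shows that for $p$ above an effective constant $c'$ the point $x$ cannot lie in the residue disk of $b_1$ or of $b_2$ at all; for the finitely many remaining primes, hypothesis (2) gives an explicit lower bound on the distance of $x$ to $\{b_1,b_2\}$. The quantitative version of this, via Theorem \ref{thmFP}, is where the regulator of $A$ enters: the bound on $N$ is in terms of the height of $P$, and $h(m_pP)$ is controlled by Silverman--Tate in terms of a set of Mordell--Weil generators of $A$ and $h_F(A)$, exactly as in the passage from Theorem \ref{thmFP} to \eqref{eqn:main_ineq}.

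Finally I would convert this into a bound on $h(x)$. Since $b_1-b_2$ is torsion of some order $n$, there is a degree-$n$ morphism $t\colon X\to\mathbb P^1$ with $\operatorname{div}(t)=n(b_1-b_2)$; for $x\in X(\Q)-\{b_1,b_2\}$ the logarithmic height of $t(x)\in\Q^{\times}$ is bounded above and below by effective affine functions of $h(x)$, and the primes at which $x$ lies in the residue disk of $b_1$ or of $b_2$ are precisely those dividing the numerator or the denominator of $t(x)$. By the previous step these primes are all bad or $\leq c'$, and at each of them the valuation of $t(x)$ is bounded via hypothesis (2); the product formula then bounds the height of $t(x)$, hence $h(x)$, effectively in terms of $g$, the bad primes, $n$, the constants implicit in (1) and (2), and a set of Mordell--Weil generators of $A$ --- ultimately in terms of the regulator of $A$.

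The step I expect to be the main obstacle is the dimension bookkeeping in the dualisation: producing from the closeness of the vanishing differentials of $A$ to $H^0(X_{\Q_p},\Omega(-(m+1)b_i))$ a genuinely corank-two, $p$-saturated submodule of $\Lie(\mathcal A)_{\Z_p}$ defined over $\Q$ to which hypothesis (1) literally applies, and in particular handling the cases $m\geq 1$, ramification of $\pi\circ\iota_{b_i}$ at $b_i$, $\operatorname{rank}(A)\geq 2$ (where the constraint is a priori a condition on the tuple of Mordell--Weil generators of $A$ rather than on a single point), and a nonzero multiple of a generator lying in the kernel of a factor. A secondary difficulty is uniformity of the final estimate: arranging that the primes produced by the product-formula argument can be taken above the (effective but a priori large) constants coming from hypotheses (1) and (2), if necessary by working with the largest prime power dividing the numerator or denominator of $t(x)$ and trading it off against the number of distinct such primes.
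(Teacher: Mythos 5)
Your overall strategy matches the paper's, but the paper's own argument for Theorem~\ref{thm3} is much shorter and does \emph{not} invoke Theorem~\ref{thmFP}, Silverman--Tate, or any of the quantitative machinery from Theorem~\ref{thm2}: it observes that hypothesis~(1) directly bounds the set of primes $p$ at which a point $x\in X(\Q)\setminus\{b_1,b_2\}$ can be congruent to $b_1$ or $b_2$ (by noting that such a congruence forces $W_p(P)$ into a fixed corank-$\geq 2$ submodule attached to $b_i$), hypothesis~(2) then bounds the $p$-adic valuation of $f(x)$ at those finitely many primes (where $f$ is a function with divisor supported on $b_1-b_2$), and the product formula gives the height bound. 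So the extra steps you import from the proof of Theorem~\ref{thm2} (Lemma~\ref{lem1} in quantitative form, Theorem~\ref{thmFP}, the $m_pP_0$ reduction, Silverman--Tate, Pazuki) are not used here and are something of a detour; you really only need the mod-$p$ statement of Lemma~\ref{lem1} (i.e.\ $N=1$), not the $p^N$ refinement.

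The substantive point is the one you already flag: after dualising, the congruence constraint one actually obtains is that the reduction of the line $L_{b_1}=(\pi\circ\iota_{b_1})_*T_{b_1}X\subset\Lie(\mathcal A)$ is contained in the reduction of $\overline{A(\Q)}^{p\text{-}\sat}$, not the reverse inclusion $r_N\bigl(\overline{A(\Q)}_{\Q_p}\cap\Lie(\mathcal A)_{\Z_p}\bigr)\subseteq r_N(C)$ that you write down. When $\operatorname{rk}(A)=1$ both sides are lines and the inclusion flips, giving $W_p(P)\in L_{b_1}$ for a Mordell--Weil generator $P$, which lies in a fixed corank-$\geq 2$ $\Q$-submodule $S\supseteq L_{b_1}$ as hypothesis~(1) requires. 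For $\operatorname{rk}(A)\geq 2$ the inclusion does not reverse: containment of $L_{b_1}$ in the span of $W_p(P_1),\ldots,W_p(P_r)$ is a genuine condition on the \emph{tuple}, and one cannot extract from it that a single fixed $W_p(P)$ lands in a fixed $S$. So your reservation is the correct one to have, and it is exactly the place where the paper's own wording (``for every $P\in A(\Q)$, $W_p(P)$ lies in the image of the line $H^0(X,\Omega(b_1))$'') is too terse to check as stated. A secondary point: the paper applies hypothesis~(2) after hypothesis~(1) has already restricted to finitely many primes, so there is no need to trade off prime powers against numbers of primes as you worry about in your last paragraph; hypothesis~(2) already gives a per-prime valuation bound and the product formula closes the argument.
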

\begin{proof}
It is enough to show that hypothesis 1 of the theorem give an effectively computable bound on the largest prime $p_0$ such that $x\in X(\Q )-\{ b_1 ,b_2 \}$ is congruent to $b_1 $ or $b_2 $ modulo $p_0$. Indeed, given such a bound, let $f:X\to \mathbb{P}^1$ be a non-constant function with divisor a multiple of $b_1 -b_2 $. Then hypothesis 2 gives a bound on the height of $f(x)$, and hence on the height of $x$.\

To show that hypothesis 1 gives such a bound on $p_0$, note that $X(\Q _p )_1$ containing a point congruent to but not equal to $b_1$ implies that, for every $P\in A(\Q )$, $W_p (P)$ lies in the image of the line $H^0 (X,\Omega (b_1 ))$, which has codimension at least 2 in $\Lie \mathcal({A})$ (and similarly for $b_2$ ).
\end{proof}

%
%
%

\subsection{The relation to Stoll's strong Chabauty conjecture}
Let $X$ be a smooth projective geometrically irreducible curve of genus $g>1$. In \cite{stoll}, Stoll makes the following conjecture.
\begin{conjecture}[``Strong Chabauty conjecture'', Stoll]
Let $f:X\to A$ be a morphism to an abelian variety whose image is not contained in the translate of a proper subgroup. If the rank of $A$ is less than $\dim (A)-1$, then there is a finite subscheme $Z\subset A$ and a set of primes $p$ of density 1 such that the intersection of the topological closure of $A(\Q )$ in $A(\Q _p )$ with $f(X(\Q _p ))$ is contained in $Z(\Q _p )$. 
\end{conjecture}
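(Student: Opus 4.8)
The plan is to run the Chabauty--Coleman argument of Sections 2 and 3 for the morphism $f\colon X\to A$ and to deduce the conjecture from a density form of the Wieferich randomness hypothesis (1) of Theorem \ref{thm3}, generalising the implication in the proof of that theorem. Fix a minimal proper regular model $X_{\mathbb{Z}}$, the N\'eron model $\mathcal{A}$ of $A$, and an integral model of $f$ over $\mathbb{Z}[1/M]$. Since $\rk(A)<\dim(A)$, for every prime $p$ of good reduction the set $X(\Q_p)_1$ attached to $f$ — the common zero locus on $X(\Q_p)$ of the Coleman integrals of the vanishing differentials of $f$ — is finite, and (for a compatible choice of basepoint) $\overline{A(\Q)}\cap f(X(\Q_p))$ is contained in the image of the finite set $X(\Q_p)_1$. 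It therefore suffices to produce a $p$-independent finite subscheme $Z\subset A$ together with a density-one set of primes $p$ for which $X(\Q_p)_1$ contains no point lying outside the preimage of $Z$. I would take $Z$ to be the union of $f(X(\Q))$ with the images of the ramification points of the finite maps $X\to X_B$ through the sub-abelian varieties $B\subseteq A$ across which the Mordell--Weil rank of $B$ forces a line to lie in $\overline{A(\Q)}_{\Q_p}$, i.e. the ``geometrically forced'' loci already appearing in Theorem \ref{thm1}; this $Z$ is finite and depends only on $X$ and $f$.

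The heart of the argument is to show that a point $y\in X(\Q_p)_1$ lying outside $Z$ forces the reduction of $\overline{A(\Q)}_{\Q_p}$ into a fixed, $p$-independent, proper closed subvariety $\Sigma$ of the Grassmannian of $\rk(A)$-planes in $\Lie(\mathcal{A})\otimes\F_p$. If $y$ is congruent modulo $p$ to a point $x\in X(\Q)$, then, as in the proof of Theorem \ref{thm1}, all vanishing differentials of $f$ vanish at $x$, so reducing the perpendicularity relations of Section 3 modulo $p$ (and using Lemma \ref{lem1} for the higher-congruence refinement) $\overline{A(\Q)}_{\Q_p}\bmod p$ must contain the image in $\Lie(A_{\F_p})$ of the fixed $\Q$-line $H^0(X,\Omega(-x))^{\perp}$, a codimension-$\geq 2$ condition since $\rk(A)<\dim(A)-1$; if instead $y$ lies in a residue disk containing no rational point, the vanishing of the reductions of the relevant Coleman integrals at $\bar y\in X(\F_p)$ places $\overline{A(\Q)}_{\Q_p}\bmod p$ in the configuration where $f(X_{\F_p})$ meets it beyond the forced points. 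In both cases the constraint is the reduction of a closed condition on the $\rk(A)$-plane $\overline{A(\Q)}_{\Q_p}$ that is defined over $\Q$ and, by a Bertini-type genericity argument, proper in the Grassmannian. By Lemma \ref{poonen_lemma}, up to the harmless $n_p$-torsion correction this plane is the span of the Wieferich vectors $W_p(P_1),\dots,W_p(P_r)$ of a set of Mordell--Weil generators. Thus a density analogue of hypothesis (1) of Theorem \ref{thm3} — for $\rk(A)=1$ literally that $W_p(P)$ misses any fixed corank-$\geq 2$ subspace for density-one $p$, and for $\rk(A)\geq 2$ the natural strengthening that the span of the $W_p(P_i)$ misses any fixed proper subvariety $\Sigma$ of the Grassmannian for density-one $p$, absent an algebraic reason — shows that the primes for which such a spurious $y$ exists have density zero, which with the $Z$ above gives the conjecture.

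The main obstacle is this arithmetic input. The density statement is a weak case of Katz's equidistribution conjecture, and, as noted after the introduction, nothing in this direction is known — even for $\mathbb{G}_m^2$ it would already imply strong Diophantine facts such as the first case of Fermat's last theorem — so the argument establishes the strong Chabauty conjecture only conditionally on such a hypothesis. A secondary, more routine point is to verify that $Z$ and the bad Grassmannian subvariety $\Sigma$ are genuinely independent of $p$: one checks that the locus of $\rk(A)$-planes creating an unexpected intersection with $f(X)$ is a fixed proper closed subvariety over $\Q$, that it acquires no extra components modulo $p$ for $p$ outside a finite set containing the divisors of $M$, and that the translation between $\overline{A(\Q)}_{\Q_p}\bmod p$ and the span of the $W_p(P_i)$ via Lemma \ref{poonen_lemma} costs only the $n_p$-torsion ambiguity, which may be absorbed into the finite exceptional set. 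Granting the density hypothesis, these checks yield the density-one set of primes and the fixed $Z$, hence the conjecture.
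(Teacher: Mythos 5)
This statement is not a theorem of the paper at all: it is Stoll's ``strong Chabauty conjecture'', quoted from \cite{stoll} as an open conjecture, and the paper offers no proof of it. The paper's own discussion goes in the opposite direction -- it formulates an even stronger variant (Conjecture \ref{strong_conjecture}), shows that variant would imply a density-zero statement about spurious points of $X(\Q_p)_1$ near a rational point, and stresses that the underlying randomness statements about Wieferich-type statistics are ``presumably very difficult to prove'' (even for $\mathbb{G}_m^2$ they would already imply the first case of Fermat's last theorem). Your proposal is therefore not a proof of the statement: as you yourself acknowledge in the final paragraph, the entire argument is conditional on a density form of hypothesis (1) of Theorem \ref{thm3} (indeed on a strengthening of it to spans of Wieferich vectors in a Grassmannian), which is precisely the unproven arithmetic input the conjecture is meant to capture. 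A conditional reduction of one open conjecture to another open (and arguably stronger) conjecture cannot be spliced in as a proof.

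Beyond the conditionality, the key unconditional step you claim is also gapped. You assert that a spurious point $y\in X(\Q_p)_1$ outside $Z$ forces $\overline{A(\Q)}_{\Q_p}\bmod p$ into a \emph{fixed, $p$-independent} proper closed subvariety $\Sigma$ of the Grassmannian, justified by ``a Bertini-type genericity argument''. This fails in the case you gloss over, where $y$ lies in a residue disk containing no rational point: the condition imposed on the plane $\overline{A(\Q)}_{\Q_p}\bmod p$ then depends on the mod-$p$ point $\bar y\in X(\F_p)$, and the number of residue disks grows like $p$, so the ``bad locus'' mod $p$ is a union of roughly $p$ conditions and is not the reduction of any fixed $\Q$-subvariety of the Grassmannian. (Heuristically each disk contributes probability about $p^{-\mathrm{codim}}$, and summing over $\sim p$ disks is exactly the probabilistic heuristic behind Stoll's conjecture -- it does not reduce to avoiding one fixed $\Sigma$.) Likewise, your mod-$p$ use of Lemma \ref{lem1} only controls points congruent to a rational point, so the dichotomy you set up does not cover the general point of $\overline{A(\Q)}\cap f(X(\Q_p))$ that the conjecture is about. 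In short: the statement is open, and the proposal neither proves it nor correctly reduces it to the stated hypothesis without further, genuinely missing, ideas.
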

To relate this to the set $X(\Q _p )_1$ as we have defined it, it is helpful to consider an even stronger conjecture. 
\begin{conjecture}\label{strong_conjecture}
Let $f:X\to A$ be a morphism to an abelian variety whose image is not contained in the translate of a proper subgroup. If the rank of $A$ is less than $\dim (A)-1$, then there is a finite subscheme $Z\subset A$ and a set of primes $p$ of density 1 such that the intersection of $\overline{A(\Q )}^{p-\sat}$ in $A(\Q _p )$ with $f(X(\Q _p ))$ is contained in $Z(\Q _p )$. 
\end{conjecture}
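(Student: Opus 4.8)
The natural line of attack is to run the Chabauty--Coleman method on $f(X)$ one residue disk at a time, bound the $p$-adic intersection points by a Newton-polygon argument as in Lemma \ref{lem1}, and then isolate the point at which an unproved equidistribution input becomes unavoidable; I expect that last point to be the genuine obstacle, and indeed the reason this is only conjectural. Restricting to the density-one set of primes of good reduction for a fixed proper regular model of $X$, one sets up Coleman integration on $X_{\Q_p}$. Since $f(X)$ is not contained in a translate of a proper subgroup, $f$ factors through an Abel--Jacobi map up to translation and $f^{*}\colon H^{0}(A_{\Q_p},\Omega)\to H^{0}(X_{\Q_p},\Omega)$ is injective; hence the space $W_p\subset H^{0}(A_{\Q_p},\Omega)$ of differentials annihilating $\log_p(A(\Q))$, which has dimension $\ge \dim A-\rk A\ge 2$ by the hypothesis $\rk A<\dim A-1$, pulls back to a space of at least two linearly independent $1$-forms on $X$. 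For $z\in X(\Q_p)$ the condition $f(z)\in\overline{A(\Q)}^{p-\sat}$ is, on each residue disk, either impossible or (after anchoring at one solution) equivalent to the simultaneous vanishing of the Coleman integrals $\int f^{*}\omega$, $\omega\in W_p$; here one uses that $\overline{A(\Q)}^{p-\sat}$ and $\exp_p(\overline{A(\Q)}_{\Q_p})$ agree on the kernel of reduction.

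Next, on a residue disk $D_y$, $y\in X(\F_p)$, meeting $f^{-1}(\overline{A(\Q)}^{p-\sat})$, fix an anchor point $z_0$ in it, an integral parameter $t$ at $z_0$, and a $\Z_p$-basis $\omega_1,\dots,\omega_k$ ($k\ge 2$) of $f^{*}W_p$ whose reduction mod $p$ is still a basis; let $F_j\in\Z_p[\![t]\!]$ be the Coleman primitive of $\omega_j$ vanishing at $z_0$. The Newton-polygon estimate of Lemma \ref{lem1} bounds the number of zeros of each $F_j$ on $D_y$, but the point of taking $k\ge 2$ is that the common zero locus of $F_1,\dots,F_k$ on $D_y$ is controlled by the reductions $\overline{F}_1,\dots,\overline{F}_k\in\F_p[\![t]\!]$: if these have no common root in $\overline{\F_p}$ there is no common zero in $D_y$, and if they share a single simple root there is exactly one. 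Summing over residue disks recovers the finiteness of $\{z\in X(\Q_p):f(z)\in\overline{A(\Q)}^{p-\sat}\}$ for each such $p$ (which is already known from Chabauty--Coleman), while the points of $X(\Q)$, finite by Faltings, account for the common zeros that are forced to occur. The finite subscheme $Z$ of the conjecture should then be taken to be a fixed finite scheme containing $f(X(\Q))$, and the real content of the statement is that one such $Z$ works simultaneously for a density-one set of $p$.

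The hard part, and the reason the statement is conjectural, is precisely the claim that for a density-one set of primes the reductions $\overline{F}_1,\dots,\overline{F}_k$ have no common zero beyond those forced by $X(\Q)$ --- equivalently, that no residue disk other than the finitely many meeting $f(X(\Q))$ contributes a point of $f^{-1}(\overline{A(\Q)}^{p-\sat})$. The differentials $\omega_j$ depend on $p$ only through the requirement that they kill $\log_p(A(\Q))$, so by Lemma \ref{poonen_lemma} the tuple $(\overline{F}_1,\dots,\overline{F}_k)$ is essentially a function of the mod-$p$ logarithms $W_{Q_1}(p),\dots,W_{Q_r}(p)$ of a set of Mordell--Weil generators $Q_1,\dots,Q_r$ of $A(\Q)$, and the genericity we need amounts to asking that $\bigl(W_{Q_1}(p),\dots,W_{Q_r}(p)\bigr)$ avoid a fixed proper closed subscheme of $\prod_{i=1}^{r}\bigl(\Lie(\mathcal A)\otimes\F_p\bigr)$ for all but a density-zero set of $p$. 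This is a statement about the Wieferich statistics of $A$ --- of the same flavour as, though formally stronger than, Katz's equidistribution conjecture, and of precisely the type appearing as hypothesis (1) of Theorem \ref{thm3} --- and as far as we know nothing about it is known for a fixed abelian variety. The residue-disk and Newton-polygon steps above are unconditional, so it is exactly this genericity of the Wieferich statistics that any complete proof must supply, and which at present one can only assume.
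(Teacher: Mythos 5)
The statement you were given is labelled a \emph{conjecture} in the paper, and the paper offers no proof of it; it is introduced as a strengthening of Stoll's strong Chabauty conjecture, related to hypothesis (1) of Theorem \ref{thm3}, and used only as a hypothesis (e.g.\ in the lemma immediately following it). So there is no paper proof against which to compare your write-up, and the honest answer to ``prove this'' is that one cannot, at present.

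Your proposal is valuable precisely because it recognises this. Running Chabauty--Coleman disk by disk, observing that $\rk A<\dim A -1$ gives at least two independent annihilating differentials, and then noticing that the obstruction concentrates in whether the mod-$p$ reductions of those differentials (equivalently, the mod-$p$ logarithms $W_{Q_i}(p)$ via Lemma \ref{poonen_lemma}) avoid a fixed proper closed locus for a density-one set of $p$ --- this is exactly the Wieferich-statistics phenomenon the paper is gesturing at with Katz's conjecture and hypothesis (1) of Theorem \ref{thm3}. Your framing makes explicit what the paper leaves implicit: the conjecture is a uniformity-in-$p$ statement whose only plausible attack route currently runs through unproved equidistribution/genericity of $\log_p$ on the Mordell--Weil group.

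Two small cautions on the unconditional part of your sketch. First, the claim that ``if $\overline{F}_1,\dots,\overline{F}_k$ have no common root in $\overline{\F_p}$ there is no common zero in $D_y$'' should be stated more carefully: all of $D_y$ reduces to $t=0$, so what one actually uses is whether some $\overline{F}_j(0)\ne 0$, not the full root locus of the $\overline{F}_j$; the Newton-polygon control on multiplicities (as in Lemma \ref{lem1}) is the right tool for counting zeros beyond the first. Second, the reduction from ``$f(z)\in\overline{A(\Q)}^{p\text{-}\sat}$'' to the vanishing of Coleman integrals is valid on the kernel of reduction, as you note, but one should keep track of the possible torsion/component-group contribution when passing from $\overline{A(\Q)}$ to its $p$-saturation (the paper makes a related point in the proof of the lemma after the conjecture). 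Neither of these changes your conclusion that the genuine gap is the density-one genericity assertion, which is open.
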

This conjecture is in a sense stronger than hypothesis (1) of Theorem \ref{thm3}, in that it predicts that in the setting of Theorem \ref{thm3}, $X(\Q _p )_1$ has an arithmetic structure for most primes. The fact that it only considers a set of primes of density 1 means it is not enough to imply hypothesis (1) of Theorem \ref{thm3}, but does imply the following weaker version.
\begin{lemma}
Let $X,b$ be as in Theorem \ref{thm2}. Suppose the rank of $J$ is less than $g-1$. Then Conjecture \ref{strong_conjecture} implies that the set of primes $p$ for which $X(\Q _p )_1$ contains a point $x$ congruent to $b$ modulo $p$, but not equal to $b$, has density zero.
\end{lemma}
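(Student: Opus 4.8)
The plan is to apply Conjecture~\ref{strong_conjecture} to the Abel--Jacobi embedding $i\colon X\to J$ based at $b$ and then transport its conclusion back to a statement about $X(\Q_p)_1$. First I would verify the two hypotheses of the conjecture for $A=J$ and $f=i$: since $g>1$ the morphism $i$ is a closed immersion and its image generates $J$, so $i(X)$ is not contained in a translate of a proper subgroup; and by assumption $\rk J<g-1=\dim J-1$. The conjecture then produces a \emph{fixed} finite subscheme $Z\subset J$ and a set $S$ of primes of density one such that
\[
\overline{J(\Q)}^{p-\sat}\cap i(X(\Q_p))\subseteq Z(\Q_p)\qquad\text{for every }p\in S.
\]

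The crucial observation is that any $x\in X(\Q_p)_1$ congruent to $b$ modulo $p$ satisfies $i(x)\in\overline{J(\Q)}^{p-\sat}$. Since $i$ is compatible with reduction and $i(b)=0$, the congruence $x\equiv b\pmod p$ forces $i(x)\in J^1(\Q_p):=\Ker(J(\Q_p)\to J(\F_p))$, on which $\log_p$ is injective. By definition, $x\in X(\Q_p)_1$ means $\int_b^x\omega=0$ for every vanishing differential, i.e. $\log_p(i(x))$ lies in the subspace $\overline{J(\Q)}_{\Q_p}\subseteq\Lie(J)_{\Q_p}$ spanned by $\log_p(J(\Q))$; and $\log_p(\overline{J(\Q)})$ is a compact subgroup of this subspace which spans it, hence a full $\Z_p$-lattice in it. Therefore $p^N\log_p(i(x))=\log_p(p^N i(x))$ lies in $\log_p(\overline{J(\Q)})$ for $N$ large, and since both $p^N i(x)$ and the corresponding element of $\overline{J(\Q)}$ may be taken in $J^1(\Q_p)$, where $\log_p$ is injective, we get $p^N i(x)\in\overline{J(\Q)}$, that is $i(x)\in\overline{J(\Q)}^{p-\sat}$. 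This is the step where the restriction to the residue disk of $b$ is genuinely used: outside it $i(x)$ need not lie in $J^1(\Q_p)$, and the $\Q_p$-linear vanishing condition cannot be promoted to membership in the $p$-saturation.

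Combining the two, for $p\in S$ any $x\in X(\Q_p)_1$ with $x\equiv b\pmod p$ has $i(x)\in Z(\Q_p)$, so $x$ lies in the fixed finite subscheme $i^{-1}(Z)\subset X$; as $x\neq b$, this means $x$ is one of finitely many closed points $z_1,\dots,z_k$ of $X$ distinct from $b$, with $z_j\in X(\Q_p)$ and $z_j$ reducing to $\bar b$ modulo $p$. For each fixed algebraic point $z_j\neq b$, the sections of a regular integral model of $X$ cut out by $z_j$ and by $b$ are distinct and hence agree over only finitely many primes, so the set of $p$ with $z_j\in X(\Q_p)$ and $z_j\equiv b\pmod p$ is finite. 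Writing $T$ for the set of primes $p$ such that $X(\Q_p)_1$ contains some $x\equiv b\pmod p$ with $x\neq b$, we conclude that $T\cap S$ is finite; since $S$ has density one, $T$ has density zero, as claimed.

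The step I expect to require the most care is the saturation argument of the second paragraph — passing from the $\Q_p$-linear vanishing condition defining $X(\Q_p)_1$ to the integral condition defining $\overline{J(\Q)}^{p-\sat}$, which is exactly where the residue-disk hypothesis enters — together with the routine but slightly delicate bookkeeping needed to discard the finitely many primes of bad reduction (or where $b$ meets a non-smooth fibre) and to check that two distinct algebraic points are congruent modulo only finitely many primes. Neither presents a genuine obstacle.
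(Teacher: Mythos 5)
Your proof is correct and follows essentially the same route as the paper: on the residue disk of $b$ the defining condition of $X(\Q_p)_1$ is upgraded to membership of $i(x)$ in $\overline{J(\Q)}^{p\text{-}\sat}$ (via the lattice structure of $\log_p(\overline{J(\Q)})$ and the injectivity of $\log_p$ on $J^1(\Q_p)$), and then Conjecture~\ref{strong_conjecture} is applied. Your final paragraph makes explicit a step the paper leaves implicit — that the fixed finite $Z$ contributes, away from a finite set of primes, no point other than $b$ in the residue disk of $b$ — but this is a matter of completeness, not a different argument.
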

\begin{proof}
Let $J(\Q _p )^{\sat }$ denote the full saturation of $\overline{J(\Q )}$ in $J(\Q _p )$ -- i.e. the set of $P\in J(\Q _p )$ such that $n\cdot P\in \overline{J(\Q )}$ for some $n>0$. The set $X(\Q _p )_1$ is equal to the the intersection of $\AJ _b (X(\Q _p ))$ with $J(\Q _p )^{\sat }$. Since $J^1 (\Q _p )$ is a $\mathbb{Z}_p$-module, we have
\[
\overline{J(\Q )}^{\sat }\cap J^1 (\Q _p )=\overline{J(\Q )}^{p-\sat }\cap J^1 (\Q _p ).
\]
Hence the set of $x\in X(\Q _p )_1$ congruent to $b\in X(\Q )$ modulo $p$ is equal to the intersection of $X(\Q _p )$ with $\overline{J(\Q )}^{p-\sat }\cap J^1 (\Q _p )$, which Conjecture \ref{strong_conjecture} predicts is equal to $\{ b\}$ for a density one set of primes.
\end{proof}

It is natural to ask how often $\overline{A(\Q )}$ and $\overline{A(\Q )}^{\sat }$ are different. In fact, this question is again related to Wieferich statistics. 
\begin{lemma}
Suppose $P\in \overline{A(\Q )}$ has the property that $\frac{1}{p}\cdot P$ is in $\overline{A(\Q )}^{p-\sat }$ but not in $\overline{A(\Q )}$. Then $pW_P (p)=0$. 
\end{lemma}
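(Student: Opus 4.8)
The plan is to observe that the construction $P\mapsto W_P(p)$ is a group homomorphism into a group annihilated by $p$, so it kills $p$-divisible points, and the hypothesis says precisely that $P$ is $p$-divisible in $A(\Q_p)$.

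In detail, I would first unwind the hypothesis. Since $\overline{A(\Q)}^{p-\sat}$ is a subgroup of $A(\Q_p)=\mathcal{A}(\Z_p)$, the assumption that $\frac{1}{p}\cdot P$ lies in it provides a point $Q\in\mathcal{A}(\Z_p)$ with $pQ=P$; the extra hypothesis $\frac{1}{p}\cdot P\notin\overline{A(\Q)}$ plays no role in the argument and only serves to single out the case in which $\overline{A(\Q)}$ and its $p$-saturation differ. Next recall the construction of $W_P(p)$: writing $\overline{P}\in\mathcal{A}(\Z/p^2\Z)$ for the image of $P$ and $n_p=\#\mathcal{A}(\F_p)$, the element $n_p\overline{P}$ maps to $n_p$ times a point of $\mathcal{A}(\F_p)$, hence to $0$, so it lies in $\Ker\bigl(\mathcal{A}(\Z/p^2\Z)\to\mathcal{A}(\F_p)\bigr)=\Lie(\mathcal{A})\otimes\Z/p\Z$, and by definition $pW_P(p)$ is exactly $n_p\overline{P}$ regarded in this group. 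In particular $P\mapsto pW_P(p)$ is the corestriction of the composite of the homomorphisms $\mathcal{A}(\Z_p)\to\mathcal{A}(\Z/p^2\Z)$ and multiplication by $n_p$, so it is a homomorphism $\mathcal{A}(\Z_p)\to\Lie(\mathcal{A})\otimes\Z/p\Z$ whose target is killed by $p$.

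The conclusion now drops out. Reduction modulo $p^2$ is a homomorphism, so $\overline{P}=p\,\overline{Q}$ in $\mathcal{A}(\Z/p^2\Z)$, and therefore
\[
pW_P(p)=n_p\overline{P}=p\cdot\bigl(n_p\overline{Q}\bigr)=0,
\]
because $n_p\overline{Q}$ already lies in $\Lie(\mathcal{A})\otimes\Z/p\Z$, which is $p$-torsion. Equivalently, $W_P(p)=W_{pQ}(p)=p\,W_Q(p)=0$ in $\frac{1}{p}\Lie(\mathcal{A})/\Lie(\mathcal{A})$.

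There is no real obstacle here; the only points needing a word of justification are that $\overline{A(\Q)}^{p-\sat}$ consists of honest $\Q_p$-points, so that $\frac{1}{p}\cdot P$ is a genuine point of $\mathcal{A}(\Z_p)$, and that $\Ker(\mathcal{A}(\Z/p^2\Z)\to\mathcal{A}(\F_p))=\Lie(\mathcal{A})\otimes\Z/p\Z$, which is the exact sequence recalled just before Lemma \ref{poonen_lemma} and holds because $\mathcal{A}$ is smooth over $\Z$. One could instead deduce the vanishing from Lemma \ref{poonen_lemma}, writing $\frac{n_p}{p}\log_A(P)=n_p\log_A(Q)$ and checking $n_p\log_A(Q)\in p\Lie(\mathcal{A})$, but this requires tracking the domain of definition of $\log_A$, so I would prefer the reduction-mod-$p^2$ argument.
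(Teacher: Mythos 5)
Your proof is correct and takes the same route as the paper: the paper's entire proof is the one-line observation that $P\mapsto pW_P(p)$ is a homomorphism from $A(\Q_p)$ to an $\F_p$-vector space, so it kills anything $p$-divisible in $A(\Q_p)$. You have simply spelled out that observation in more detail, including the (accurate) remark that the hypothesis $\frac{1}{p}\cdot P\notin\overline{A(\Q)}$ is not used.
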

\begin{proof}
This follows from the fact that the map $P\mapsto pW_P (p)$ is a homomorphism from $A(\Q _p )$ to an $\F _p $-vector space.
\end{proof}
\section{Possible generalisations to the Chabauty--Kim method}
Let $X$ be a smooth projective geometrically irreducible curve of genus $g>1$ with Jacobian $J$.
If $\overline{J(\Q )}$ is finite index in $J(\Q _p )$, then Chabauty's method no longer gives a control on the rational points of $X$. However, one can sometimes use a non-abelian generalisation of Chabauty--Coleman, due to Kim, instead. In this section we suggest partial analogues of some of the constructions above to Kim's setting. As many of the technical details of Kim's construction are not relevant to this concerns of this paper, we only outline the general theory in broad strokes, and refer the reader to more foundational articles \cite{Kim}, \cite{Kim:Selmer} for more details.

The Chabauty--Kim method instead uses the commutative diagram
\[
\begin{tikzcd}
X(\Q ) \arrow[d] \arrow[r, "j_n"] & \Sel (U_n ) \arrow[d, "\loc _p "] \\
X(\Q _p ) \arrow[r, "j_{n,p}"]           & U_n ^{\dR}/F^0    .           
\end{tikzcd}
\]
Here $U_n $ is a finite dimensional unipotent group over $\Q _p $ with a continuous action of $\Gal (\overline{\Q }|\Q )$, and $U_n ^{\dR}$ is a finite dimensional unipotent group over $\Q _p $ with a subgroup $F^0 $. The Selmer variety $\Sel (U_n )$ is a scheme of finite type over $\Q _p $, and the morphism $\loc _p $ comes from a morphism of algebraic varieties. The map $j_{n,p}$ is locally analytic. We define
\[
X(\Q _p )_n :=j_{n,p}^{-1}\loc _p (\Sel (U_n )).
\]
This contains $X(\Q )$, and Kim conjectures that, for $n\gg 0$, they are equal \cite[Conjecture 3.1]{BDCKW}.

Although it is usually defined as a set, in practice (e.g. when doing computations) $X(\Q _p )_n$ actually has a richer structure of zero dimensional analytic space in the same way that $X(\Q _p )_1$ does. The correct notion of `analytic space' is a little subtle, because the map $j_{n,p}$ is not a morphism of rigid analytic space. However, it is locally analytic, so we may fix an $\F _p $ point $x_0$ and restrict to the space $]x_0 [\subset X_{\Q _p }^{\an }$ of points reducing to $x_0 $. Then $j_{n,p}|_{]x_ 0 [}$ is a morphism of rigid analytic spaces, and hence we may form the fibre product
\[
X(\Q _p )_n \cap ]b_0 [ := ]b_0 [ \times _{(U_n ^{\dR} /F^0 )^{\an }}(j_{n,p}\Sel (U_n ))^{\an }
\]
(where $j_{n,p}\Sel (U_n )$ denotes the scheme-theoretic image). The generalisations of Theorems \ref{thm1} and \ref{thm2} would then involve understanding when $X(\Q _p )_n \cap ]b_0 [$ is non-reduced, and bounding how close its $p$-adic points can get to a given rational point in $X(\Q )$.
This should also have a relation to questions in transcendence theory, however as we are in a `non-linear' situation, it seems that the above theorems in transcendence theory cannot be applied (at least not naively).
%
%
\subsection{Example: integral points on elliptic curves}
It is worthwhile considering the simplest example. Let $E$ be an elliptic curve of Mordell--Weil rank one and Tamagawa number one at all bad primes. Fix a generator $Q$ of $E$, and a prime $p$ of good ordinary reduction. Now suppose we have an integral point $P$ of infinite order on a rank 1 elliptic curve. Then the Chabauty--Kim method implies $E(\mathbb{Z}_p )_2 $ is contained in the zeroes of the Coleman function
\[
f(z)=(\int  ^Q _0 \omega _0 )^2 \int ^z _P \omega _0 \omega _1 -h(Q)(\int ^z _P \omega _0 +\int ^P _{-P}\omega _0 )\int ^z _P \omega _0
\]
where $\omega _i :=x^i dx/2y$ and $h$ is the $p$-adic height associated to the unit root splitting (see \cite{Kim:elliptic}). The derivative evaluated at $P$ is given by
\[
-2h(Q)\log (P) P^* \omega
\]
(the other terms vanish). We deduce the following `non-abelian version' of Theorem \ref{thm1}.
\begin{proposition}
Suppose $f(z)$ has a repeated root at the integral point $P$. Then the $p$-adic height pairing is degenerate, i.e. $h$ is identically zero on rational points.
\end{proposition}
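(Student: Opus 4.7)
The plan is to read off the conclusion directly from the derivative formula displayed immediately above the proposition. First I would verify that $P$ is automatically a root of $f$, so that a ``repeated root at $P$'' is equivalent to $df(P)=0$. Indeed each summand of $f$ contains a factor of $\int_P^z\omega_0$ or of the iterated integral $\int_P^z\omega_0\omega_1$, both of which vanish at $z=P$; so $f(P)=0$ independently of any hypothesis.

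Given this, the repeated-root hypothesis becomes $df(P)=0$. Substituting the stated derivative formula yields
\[
-2\,h(Q)\,\log(P)\,P^{*}\omega_0 \;=\; 0
\]
in the cotangent space to $E$ at $P$. Now the pullback $P^{*}\omega_0$ of the invariant differential $\omega_0=dx/2y$ is a nonzero cotangent vector. Moreover, $E$ has good ordinary reduction at $p$ and $P$ is non-torsion, so the Coleman (equivalently formal) logarithm $\log\colon E(\Q_p)\to\Lie(E)_{\Q_p}$, whose kernel is exactly the torsion subgroup, satisfies $\log(P)\neq 0$. The only remaining factor that can vanish is $h(Q)$; therefore $h(Q)=0$.

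Finally I would propagate this to all rational points. Since $E$ has Mordell--Weil rank one with $Q$ a generator modulo torsion, every rational point has the form $nQ+T$ with $n\in\Z$ and $T$ torsion. The $p$-adic height $h$ associated to the unit-root splitting is a quadratic form that vanishes on torsion, so $h(nQ+T)=n^{2}h(Q)=0$; thus $h$ vanishes identically on $E(\Q)$, which is the claimed degeneracy.

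The only potential obstacle is confirming the derivative formula itself, i.e. checking that the various terms coming from differentiating the first summand and the cross term in $f$ all vanish at $z=P$ because they are multiples of $\int_P^P\omega_0$ or of $\int_P^P\omega_0\omega_1$; once this routine Coleman-integral computation is done, the rest of the argument is a short linear-algebra deduction.
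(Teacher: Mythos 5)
Your argument is correct and matches the paper's (implicit) reasoning: the paper states the derivative formula $-2h(Q)\log(P)\,P^*\omega$ and then records the proposition as an immediate consequence; you simply spell out the routine steps --- that $f(P)=0$ automatically so ``repeated root'' means $df(P)=0$, that $P^*\omega_0\neq 0$ and $\log(P)\neq 0$ for $P$ of infinite order, hence $h(Q)=0$, and finally that quadraticity of $h$ together with $Q$ generating $E(\Q)$ modulo torsion gives vanishing on all of $E(\Q)$. This is the same deduction the paper is relying on, just written out in full.
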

We can similarly obtain a non-abelian version of Theorem 2, in terms of the valuation of the $p$-adic height.

\bibliography{references}

\begin{thebibliography}{BDCKW18}

\bibitem[BDCKW18]{BDCKW}
Jennifer~S. Balakrishnan, Ishai Dan-Cohen, Minhyong Kim, and Stefan Wewers.
\newblock A non-abelian conjecture of {T}ate-{S}hafarevich type for hyperbolic
  curves.
\newblock {\em Math. Ann.}, 372(1-2):369--428, 2018.

\bibitem[BLR90]{BLR}
Siegfried Bosch, Werner L\"{u}tkebohmert, and Michel Raynaud.
\newblock {\em N\'{e}ron models}, volume~21 of {\em Ergebnisse der Mathematik
  und ihrer Grenzgebiete (3) [Results in Mathematics and Related Areas (3)]}.
\newblock Springer-Verlag, Berlin, 1990.

\bibitem[Bru08]{bruin}
N.~Bruin.
\newblock The arithmetic of {P}rym varieties in genus 3.
\newblock {\em Compos. Math.}, 144(2):317--338, 2008.

\bibitem[BS10]{bruinstoll}
Nils Bruin and Michael Stoll.
\newblock The {M}ordell-{W}eil sieve: proving non-existence of rational points
  on curves.
\newblock {\em LMS J. Comput. Math.}, 13:272--306, 2010.

\bibitem[Cha41]{chabauty}
C.~Chabauty.
\newblock Sur les points rationnels des courbes alg\'{e}briques de genre
  sup\'{e}rieur \`a l'unit\'{e}.
\newblock {\em C. R. Acad. Sci. Paris}, 212:882--885, 1941.

\bibitem[Col85]{coleman}
R.~F. Coleman.
\newblock Effective {C}habauty.
\newblock {\em Duke Math. J.}, 52(3):765--770, 1985.

\bibitem[FP15a]{fuchs_pham}
C.~Fuchs and D.~H. Pham.
\newblock Commutative algebraic groups and {$p$}-adic linear forms.
\newblock {\em Acta Arith.}, 169(2):115--147, 2015.

\bibitem[FP15b]{fuchs_pham_subgroup}
C.~Fuchs and D.~H. Pham.
\newblock The {$p$}-adic analytic subgroup theorem revisited.
\newblock {\em p-Adic Numbers Ultrametric Anal. Appl.}, 7(2):143--156, 2015.

\bibitem[Gau06]{gaudron}
\'{E}. Gaudron.
\newblock Formes lin\'{e}aires de logarithmes effectives sur les
  vari\'{e}t\'{e}s ab\'{e}liennes.
\newblock {\em Ann. Sci. \'{E}cole Norm. Sup. (4)}, 39(5):699--773, 2006.

\bibitem[HK91]{HK1}
N.~Hirata-Kohno.
\newblock Formes lin\'{e}aires de logarithmes de points alg\'{e}briques sur les
  groupes alg\'{e}briques.
\newblock {\em Invent. Math.}, 104(2):401--433, 1991.

\bibitem[HK93]{HK2}
N.~Hirata-Kohno.
\newblock Une relation entre les points entiers sur une courbe alg\'{e}brique
  et les points rationnels de la jacobienne.
\newblock In {\em Advances in number theory ({K}ingston, {ON}, 1991)}, Oxford
  Sci. Publ., pages 421--433. Oxford Univ. Press, New York, 1993.

\bibitem[Kat15]{katz}
N.~M. Katz.
\newblock Wieferich past and future.
\newblock In {\em Topics in finite fields}, volume 632 of {\em Contemp. Math.},
  pages 253--270. Amer. Math. Soc., Providence, RI, 2015.

\bibitem[Kim05]{Kim}
M.~Kim.
\newblock The motivic fundamental group of $\mathbb{P}^1 \backslash \{
  0,1,\infty \}$ and {S}iegel's theorem.
\newblock {\em Invent. Math.}, 2005.

\bibitem[Kim09]{Kim:Selmer}
Minhyong Kim.
\newblock The unipotent {A}lbanese map and {S}elmer varieties for curves.
\newblock {\em Publ. Res. Inst. Math. Sci.}, 45(1):89--133, 2009.

\bibitem[Kim10]{Kim:elliptic}
M.~Kim.
\newblock Massey products for elliptic curves of rank 1.
\newblock {\em J. Amer. Math. Soc.}, 23(3):725--747, 2010.

\bibitem[Mat10]{matev2010p}
T.~Matev.
\newblock The p-adic analytic subgroup theorem and applications.
\newblock {\em arXiv preprint arXiv:1010.3156}, 2010.

\bibitem[MP12]{mccallumpoonen}
W.~McCallum and B.~Poonen.
\newblock The method of {C}habauty and {C}oleman.
\newblock In {\em Explicit methods in number theory}, volume~36 of {\em Panor.
  Synth\`eses}, pages 99--117. Soc. Math. France, Paris, 2012.

\bibitem[MW93]{MW93}
David Masser and Gisbert W\"{u}stholz.
\newblock Periods and minimal abelian subvarieties.
\newblock {\em Ann. of Math. (2)}, 137(2):407--458, 1993.

\bibitem[Paz12]{pazuki}
Fabien Pazuki.
\newblock Theta height and {F}altings height.
\newblock {\em Bull. Soc. Math. France}, 140(1):19--49, 2012.

\bibitem[Poo01]{poonen_torsion}
B.~Poonen.
\newblock Computing torsion points on curves.
\newblock {\em Experiment. Math.}, 10(3):449--465, 2001.

\bibitem[RG20]{remond2020nouveaux}
Ga{\"e}l R{\'e}mond and Eric Gaudron.
\newblock Nouveaux th{\'e}or{\`e}mes d'isog{\'e}nies.
\newblock 2020.

\bibitem[Sik15]{siksek:sieve}
Samir Siksek.
\newblock Chabauty and the {M}ordell-{W}eil sieve.
\newblock In {\em Advances on superelliptic curves and their applications},
  volume~41 of {\em NATO Sci. Peace Secur. Ser. D Inf. Commun. Secur.}, pages
  194--224. IOS, Amsterdam, 2015.

\bibitem[Sil83]{silverman}
Joseph~H. Silverman.
\newblock Heights and the specialization map for families of abelian varieties.
\newblock {\em J. Reine Angew. Math.}, 342:197--211, 1983.

\bibitem[Sto06]{stoll}
M.~Stoll.
\newblock Finite descent obstructions and rational points on curves, draft
  version no. 8.
\newblock 2006.

\bibitem[Suz94]{suzuki}
J.~Suzuki.
\newblock On the generalized {W}ieferich criteria.
\newblock {\em Proc. Japan Acad. Ser. A Math. Sci.}, 70(7):230--234, 1994.

\bibitem[W\"89]{wustholz}
G.~W\"{u}stholz.
\newblock Algebraische {P}unkte auf analytischen {U}ntergruppen algebraischer
  {G}ruppen.
\newblock {\em Ann. of Math. (2)}, 129(3):501--517, 1989.

\end{thebibliography}
\bibliographystyle{alpha}
\end{document}